\definecolor{light-gray}{gray}{0.95}
\newtheorem{theorem}{Theorem}[section]
\newtheorem{lemma}[theorem]{Lemma}
\newtheorem{proposition}[theorem]{Proposition}
\newtheorem{corollary}[theorem]{Corollary}
\newtheorem{remark}[theorem]{Remark}
\newtheorem{definition}{Definition}[section]
\numberwithin{equation}{section}
\newcommand{\one}{\mathds{1}}
\newcommand{\mc}[1]{{\mathcal #1}}
\newcommand{\bb}[1]{{\mathbb #1}}
\newcommand{\ds}[1]{{\mathds #1}}
\renewcommand{\epsilon}{\varepsilon}
\newcommand{\eps}{\varepsilon}
\newcommand{\Z}{\mathds{Z}}
\newcommand{\R}{\mathds{R}}
\newcommand{\wtilde}{\widetilde}
\def\centerarc[#1](#2)(#3:#4:#5){\draw[#1] ($(#2)+({#5*cos(#3)},{#5*sin(#3)})$) arc (#3:#4:#5);}
\newcommand{\dd}{\text{\rm d}}             % a straight d for differentials
\def\II{\mathrm{I\kern-0.1emI}}
\def\III{\mathrm{I\kern-0.1emI\kern-0.1emI}}
\def\IV{\mathrm{I\kern-0.1emV}}
\DeclareMathOperator{\Supp}{supp}
\newcommand{\Aone}{$(\textrm{A1})$ }
\newcommand{\Athree}{$(\textrm{A3})$ }
\newcommand{\Asix}{$(\textrm{A6})$ }
\let\oldtocsection=\tocsection
\let\oldtocsubsection=\tocsubsection
\let\oldtocsubsubsection=\tocsubsubsection
\renewcommand{\tocsection}[2]{\hspace{0em}\oldtocsection{#1}{#2}}
\renewcommand{\tocsubsection}[2]{\hspace{1em}\oldtocsubsection{#1}{#2}}
\renewcommand{\tocsubsubsection}[2]{\hspace{2em}\oldtocsubsubsection{#1}{#2}}
\DeclareRobustCommand{\SkipTocEntry}[5]{}
\begin{document}

\title[]{A strong large deviation principle for the\\ empirical measure of  random walks}

\author{Dirk Erhard}
\address{\noindent UFBA\\
	Instituto de Matem\'atica, Campus de Ondina, Av. Adhemar de Barros, S/N. CEP 40170-110\\
	Salvador, Brazil}
\email{erharddirk@gmail.com}

\author{Tertuliano Franco}
\address{UFBA\\
	Instituto de Matem\'atica, Campus de Ondina, Av. Adhemar de Barros, S/N. CEP 40170-110\\
	Salvador, Brazil}
\curraddr{}
\email{tertu@ufba.br}
\thanks{}

\author{Joedson de Jesus Santana}
\address{UFBA\\
	Instituto de Matem\'atica, Campus de Ondina, Av. Adhemar de Barros, S/N. CEP 40170-110\\
	Salvador, Brazil}
\curraddr{}
\email{joedson.santana@ufba.br}
\thanks{}

\subjclass[2010]{60F10, 60J27, 60B05}

\begin{abstract}
	In this article we show that the empirical measure of certain continuous time random walks satisfies a strong large deviation principle with respect to a topology introduced in~\cite{MV2016} by Mukherjee and Varadhan. This topology is natural in models which exhibit an invariance with respect to spatial translations. Our result applies in particular to the case of simple random walk and complements the results obtained in~\cite{MV2016} in which the large deviation principle has been established for the empirical measure of Brownian motion.
\end{abstract}

\maketitle

\tableofcontents

\section{Introduction}
Let $X=(X_t)_{t\geq 0}$ be a Markov process on a Polish space $(\Sigma, d)$. In the 1970's, Donsker and Varadhan~\cite[II]{DonskerVaradhanI} showed that under suitable conditions the empirical measure defined by
\begin{equation*}
L_t = \frac{1}{t} \int_0^t\one_{X_s} \dd s
\end{equation*}
satisfies a large deviation principle in the space of probability measures equipped with the weak topology given that $\Sigma$ is \emph{compact}. When the state space $\Sigma$ is \emph{not compact}, the upper bound holds for \emph{compact} sets rather than \emph{closed} sets. Meanwhile, the lower bound remains valid under a certain irreducibility assumption which we will detail further below.

In certain scenarios, this \emph{weak} large deviation principle can be upgraded to a standard (or \emph{strong}) one, thereby recovering the large deviation upper bound for all closed sets. However, such cases do not encompass many natural examples. To address the lack of compactness in applications, a confining drift may be added to the Markov chain (or diffusion)~\cite{DV83}, or it may be folded onto a large torus~\cite{B94,BH97,DV79}. Folding the Markov process on a large torus is often sufficient when one aims for more crude information like the behaviour of a partition function. Yet, if one is interested in finer properties of the model at hand the above mentioned methods are not enough to account for the lack of a strong large deviation principle.

 Recently, Mukherjee and Varadhan~\cite{MV2016} introduced a new approach by embedding the space of probability measures on $\mathbb{R}^d$ into a larger space equipped with a certain topology, rendering it a \emph{compact} metric space. Under this new topology, they established a \emph{strong} large deviation principle for the empirical measure of Brownian motion~\cite[Theorem 4.1]{MV2016}, which was then successfully applied to the \textit{Polaron Problem}~\cite{BKM2017, KM2017, MV2016}. The compactification of measures has also proven fruitful in the context of directed polymers, as demonstrated in~\cite{BatCha20,BroMuk19}. For an overview of Large Deviation Theory, readers may consult~\cite{DemboZei10:book,dH-book2000}, and for the role of topology in this theory, refer to~\cite{Var18}.

\subsection{Topology of Mukherjee and Varadhan}
In this section we will explain the topology of Mukherjee and Varadhan. In the original article~\cite{MV2016} the authors constructed a metric space $\wtilde{\mc X}$ as an enlargement of $\mc M_1(\R^d)$, the space of probability measures on $\R^d$. Since we are interested in a discrete context our construction starts with the space of probability measures $\mc M_1=\mc M_1(\Z^d)$ on $\Z^d$. The changes that need to be done to adapt from the original framework are minor and we outline those changes whenever necessary.

Denote by  $\wtilde{\mathcal{M}}_1  = \mathcal{M}_1 \ /\!\sim$ the quotient space of $\mathcal{M}_1$  under the action of $\Z^d$ (as an additive group on $\mathcal{M}_1$).
For any $\mu \in \mathcal{M}_1$, its orbit is defined by $\wtilde{\mu}=\{\mu*\delta_x : x\in \R^d\}\in\widetilde{\mathcal{M}}_1$.
For $k\geq 2$, we define $\mc F_k$ as the space of all functions $f:(\Z^d)^k\to \R$ that are diagonally translation invariant, i.e.,
\begin{equation*}
f(u_1 +x, \ldots, u_k+x)= f(u_1,\ldots, u_k)\,\qquad \forall\, x, u_1,\ldots, u_k\in\Z^d\,,
\end{equation*}
and vanishing  once getting away from the diagonal, i.e.,
\begin{equation*}
\lim_{\max_{i\neq j}|u_i-u_j|\to\infty} f(u_1,\ldots, u_k)=0\,.
\end{equation*}
For $k\geq 2$, $f\in\mc F_k$ and $\mu\in \mc M_{\leq 1}$, we define $\Lambda (f,\mu)$, the integral of $f$ with respect to a product measure of $k$ copies of $\mu$, by
\begin{equation*}
\Lambda (f,\mu):=\int f(u_1, \ldots, u_k)\prod_{1\leq i\leq k} \mu(\dd u_i)\,.
\end{equation*}
Due to the translation invariance of $f$, the above expression actually only depends on the orbit $\widetilde\mu$. The class of test functions that is of special importance is defined via
\begin{equation*}
\mc F=\bigcup_{k\geq 2}\mc F_k\,.
\end{equation*}
As noted in~\cite{MV2016}, there exists a countable dense set for each $\mc F_r$ (under the uniform metric). Taking the union of all of these subsets we obtain a countable set, which we will write as
\begin{equation*}
\big\{f_r(u_1, \ldots, u_{k_r}): r\in\bb N\big\}\,.
\end{equation*}
The desired compactification of $\wtilde{\mc M}_{\leq 1}$ is the space $\wtilde{\mc X}$ defined via
\begin{equation*}
\wtilde{\mc X}:=\Big\{\xi=\{\wtilde\alpha_i\}_{i\in I}\,:\, \wtilde\alpha_i\in\wtilde{\mc M}_{\leq 1}\,, \sum_{i\in I}\alpha_i(\Z^d)\leq 1\Big\}
\end{equation*}
equipped with the metric
\begin{equation*}
{\bf D}(\xi_1, \xi_2):=\sum_{r\geq 1}\frac{1}{2^r}\, \frac{1}{1+\|f_r\|_{\infty}}\,
\Big| \sum_{\wtilde\alpha\in\xi_1}\Lambda(f_r,\alpha)-\sum_{\wtilde\alpha\in\xi_2}
\Lambda(f_r,\alpha)\Big|\,.
\end{equation*}
Theorem 3.1 in~\cite{MV2016}  reads as follows.
\begin{theorem}\label{thm:metric}
	${\bf D}$ is a metric on $\wtilde{\mc X}$.
\end{theorem}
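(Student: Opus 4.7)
The proof reduces to checking the four metric axioms. Before doing so, I would observe that $\Lambda(f_r,\alpha)$ depends on $\alpha$ only through its orbit: the diagonal translation invariance of $f_r \in \mc F_{k_r}$ yields $\Lambda(f_r,\alpha * \delta_x) = \Lambda(f_r,\alpha)$, so the quantities in the definition of $\mathbf{D}$ are well-defined functions on $\widetilde{\mc X}$. The first three metric axioms are essentially formal. Non-negativity and symmetry are obvious. Finiteness follows from the a priori bound
\begin{equation*}
|\Lambda(f_r,\alpha)| \leq \|f_r\|_\infty\, \alpha(\Z^d)^{k_r} \leq \|f_r\|_\infty\, \alpha(\Z^d),
\end{equation*}
valid because $k_r \geq 2$ and $\alpha(\Z^d)\leq 1$; summing over $\widetilde\alpha \in \xi$ and using $\sum_{i\in I}\alpha_i(\Z^d) \leq 1$ gives $|\sum_{\widetilde\alpha \in \xi}\Lambda(f_r,\alpha)| \leq \|f_r\|_\infty$, so each term in $\mathbf{D}$ is bounded by $2^{1-r}$. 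The triangle inequality is inherited term-by-term from $|a-b|\leq|a-c|+|c-b|$.

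The substantive axiom is identity of indiscernibles. Assuming $\mathbf{D}(\xi_1,\xi_2)=0$, every term vanishes, so $\sum_{\widetilde\alpha \in \xi_1}\Lambda(f_r,\alpha) = \sum_{\widetilde\alpha \in \xi_2}\Lambda(f_r,\alpha)$ for each $r$. To extend this to all $f \in \mc F$, I would use the uniform bound
\begin{equation*}
\Big|\sum_{\widetilde\alpha \in \xi}\Lambda(g,\alpha) - \sum_{\widetilde\alpha \in \xi}\Lambda(g',\alpha)\Big| \leq \|g-g'\|_\infty \quad (g,g' \in \mc F_k),
\end{equation*}
derived exactly as above, combined with the fact that $\{f_r : k_r = k\}$ is uniformly dense in $\mc F_k$. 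This yields $\sum_{\widetilde\alpha \in \xi_1}\Lambda(f,\alpha) = \sum_{\widetilde\alpha \in \xi_2}\Lambda(f,\alpha)$ for every $f \in \mc F$.

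It then remains to show that the map $\xi \mapsto \bigl(f \mapsto \sum_{\widetilde\alpha \in \xi} \Lambda(f,\alpha)\bigr)$ is injective on $\widetilde{\mc X}$. This is the main obstacle. Following the strategy of~\cite{MV2016}, the key is to use localized test functions of the form $f(u_1,\ldots,u_k) = \psi(u_2-u_1)\psi(u_3-u_1)\cdots \psi(u_k-u_1)$ with $\psi:\Z^d \to \R$ compactly supported; these lie in $\mc F_k$ because the defining product forces all $u_i$ to stay within $\mathrm{supp}(\psi)$ of $u_1$. Such $f$ ``resolves'' orbits at the spatial scale of $\mathrm{supp}(\psi)$, since its contribution from a given $\widetilde\alpha$ only picks up configurations where the chosen representative $\alpha$ concentrates mass in a bounded window. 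Because the masses $\alpha_i(\Z^d)$ of the components of $\xi$ are summable and bounded by $1$, one can enumerate the orbits in each $\xi_j$ in decreasing order of mass; by taking $\psi$ approximating $\one_{\{0\}}$ and letting $k \to \infty$ one extracts the largest mass and its orbit, and iterating separates all components. The delicate point throughout is that $\xi$ may contain countably many orbits and no single test function can see all of them, so the argument requires careful truncation and tail estimates controlled by the smallness of the residual mass.
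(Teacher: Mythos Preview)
Your proposal is correct and matches the paper's approach: the paper does not give an independent proof of this theorem but simply states that it ``follows from the same proof as in~\cite{MV2016}'', and your sketch follows precisely that strategy, explicitly invoking~\cite{MV2016} for the delicate separation argument. If anything, you supply more detail than the paper does, correctly isolating the identity of indiscernibles as the only nontrivial axiom and outlining the localized test-function approach used there.
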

The second key result in ~\cite{MV2016} is its Theorem 3.2 which reads:
\begin{theorem}\label{thm:completion}
	The set of orbits $\wtilde{\mc M}_1$ is dense in $\wtilde{\mc X}$. Furthermore, given any sequence $(\wtilde{\mu}_n)_n$ in $\wtilde{\mc M}_1$, there is a subsequence that converges to a limit in $\wtilde{\mc X}$. Hence, $\wtilde{\mc X}$ is a compactification of $\wtilde{\mc M}_1$. It is also the completion under the metric ${\bf D}$ of $\wtilde{\mc M}_1$.
\end{theorem}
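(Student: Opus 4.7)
The plan is to prove the three claims in order: density, sequential compactness, and finally the completion statement (which follows automatically from the first two together with Theorem~\ref{thm:metric}). The structural mechanism throughout is the diagonal vanishing of test functions in $\mc F$: whenever a probability measure is split into pieces whose centers are pushed far apart, every ``cross-piece'' contribution to each $\Lambda(f_r,\cdot)$ vanishes in the separation limit.

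\emph{Density.} Fix $\xi=\{\wtilde\alpha_i\}_{i\in I}\in\wtilde{\mc X}$; $I$ is at most countable. I enumerate $I=\{1,2,\ldots\}$, pick representatives $\alpha_i\in\mc M_{\leq 1}$, and set $c:=1-\sum_i\alpha_i(\Z^d)\in[0,1]$. For each $N\geq 1$ I choose sites $x_1^{(N)},\ldots,x_N^{(N)}$ together with auxiliary sites $y_1^{(N)},\ldots,y_{M_N}^{(N)}$, with $M_N\to\infty$ and all pairwise distances tending to $\infty$ as $N\to\infty$, and define the probability measure
\begin{equation*}
\mu_N\ :=\ \sum_{i=1}^N \alpha_i*\delta_{x_i^{(N)}}\ +\ \frac{c}{M_N}\sum_{j=1}^{M_N}\delta_{y_j^{(N)}}\,.
\end{equation*}
For $f_r\in\mc F_{k_r}$ I expand $\Lambda(f_r,\mu_N)$ by sourcing each of the $k_r$ arguments from one summand of $\mu_N$. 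The ``pure bubble $i$'' terms contribute exactly $\Lambda(f_r,\alpha_i)$ by translation invariance; the ``pure Dirac $j$'' terms sum to $c^{k_r} f_r(0,\ldots,0)/M_N^{k_r-1}\to 0$ since $k_r\geq 2$; and every remaining (mixed) term has two arguments with separation diverging in $N$, so by off-diagonal vanishing of $f_r$ together with the fact that $\mu_N^{\otimes k_r}$ has total mass $1$, the total mixed contribution is $O(\epsilon)$ for $N$ large, after approximating each $\alpha_i$ by a finitely supported measure. Hence $\Lambda(f_r,\mu_N)\to\sum_{i\geq 1}\Lambda(f_r,\alpha_i)$ for every $r$, and dominated convergence in the series defining ${\bf D}$ yields ${\bf D}(\wtilde\mu_N,\xi)\to 0$.

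\emph{Sequential compactness.} Given $(\wtilde\mu_n)\subset\wtilde{\mc M}_1$ I extract ``concentration bubbles'' iteratively. Set $q_n(R):=\sup_{x\in\Z^d}\mu_n(B(x,R))$ and $p_1:=\lim_{R\to\infty}\liminf_n q_n(R)\in[0,1]$. If $p_1=0$, a uniform tightness estimate combined with off-diagonal vanishing shows $\Lambda(f_r,\mu_n)\to 0$ for every $r$, i.e.\ $\wtilde\mu_n$ converges to the empty element of $\wtilde{\mc X}$. Otherwise, pick shifts $x_n^{(1)}$ along which $\mu_n(B(x_n^{(1)},R))$ is close to $q_n(R)$; by a Cantor diagonal on the countable set $\Z^d$, a further subsequence of $\mu_n*\delta_{-x_n^{(1)}}$ converges pointwise to some $\alpha_1\in\mc M_{\leq 1}$ with $\alpha_1(\Z^d)\geq p_1>0$. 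Declare $\wtilde\alpha_1$ the first bubble, replace $\mu_n$ by its nonnegative residual after subtracting a truncation of $\alpha_1*\delta_{x_n^{(1)}}$, and iterate. A Fatou-type inequality gives $\sum_i\alpha_i(\Z^d)\leq 1$, and the same cross-piece cancellation as in the density step yields $\Lambda(f_r,\mu_n)\to\sum_i\Lambda(f_r,\alpha_i)$ for every $r$, hence ${\bf D}(\wtilde\mu_n,\{\wtilde\alpha_i\}_i)\to 0$ in $\wtilde{\mc X}$.

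Once both parts are in hand, $(\wtilde{\mc X},{\bf D})$ is sequentially compact, therefore compact and a fortiori complete, and density of $\wtilde{\mc M}_1$ identifies $\wtilde{\mc X}$ as the metric completion of $\wtilde{\mc M}_1$. The main obstacle is the iterative bubble extraction: one must verify that the shifts produced at successive iterations satisfy $|x_n^{(i)}-x_n^{(j)}|\to\infty$ for every $j<i$ (so distinct bubbles are asymptotically separated and successive subtractions leave genuine nonnegative measures), and that the procedure terminates in a family of total mass at most $1$. A secondary but delicate bookkeeping point is the \emph{uniform} vanishing of all mixed terms in the multilinear expansion of $\Lambda(f_r,\cdot)$, which needs approximation of the possibly unbounded supports of each $\alpha_i$ by bounded ones \emph{before} letting the cluster separations diverge.
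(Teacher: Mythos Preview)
Your two-step strategy matches the paper's (which adapts \cite{MV2016}): for density you spread the bubbles far apart and append a totally disintegrating filler, and for sequential compactness you iterate a concentration-compactness bubble extraction, which is precisely the mechanism packaged in the paper as Lemma~\ref{lem:decomposition} and Corollary~\ref{cor:decomposition}. The only visible difference is cosmetic: the paper uses the uniform measure $\nu_n$ on $[-n,n]^d\cap\Z^d$ as the disintegrating filler where you use a growing cloud of far-apart Diracs; both work for the same reason (total disintegration kills all $\Lambda(f,\cdot)$).

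One bookkeeping slip to correct in the density step: with $c:=1-\sum_{i\geq 1}\alpha_i(\Z^d)$ fixed and only the first $N$ bubbles placed, your $\mu_N$ has total mass $\sum_{i\leq N}\alpha_i(\Z^d)+c=1-\sum_{i>N}\alpha_i(\Z^d)<1$ whenever $I$ is infinite, so $\wtilde\mu_N\notin\wtilde{\mc M}_1$ and the density conclusion does not literally follow. Replace $c$ by $c_N:=1-\sum_{i\leq N}\alpha_i(\Z^d)$, or, as the paper does, first reduce to finite $\xi$ by a diagonal argument and then take $c$ for that finite collection.
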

Theorem~\ref{thm:metric} follows from the same proof as in~\cite{MV2016}, Theorem~\ref{thm:completion} however requires some small modifications that we will outline in Section~\ref{S: 2.1}.
Before we close this section let us give an illustrative example.
Define a measure
\begin{equation*}
\mu_n= \frac12 {\mc U}(\{n-1,n,n+1\}) +\frac13 \mc U(\{-n-1,-n,-n+1\})+\frac16 \mc U(\{-n,-n+1,\ldots, n\})\,,
\end{equation*}
where $\mc U(A)$ denotes the uniform distribution on the set $A$.
Then, $(\mu_n)$ clearly does not converge in the weak topology. However, one can show that its orbit $(\wtilde\mu_n)$ converges in $(\wtilde{\mc X}, {\bf D})$ to
\begin{equation*}
\xi=\Big\{\frac12 \mc U_3(\cdot),\, \frac13 \mc U_3(\cdot)\Big\}\,.
\end{equation*}
Here $\mc U_3(\cdot)$ denotes the orbit of the uniform distribution on a set of three consecutive numbers.
We note that the contribution of $\tfrac16 \mc U(\{-n,-n+1,\ldots, n\})$ vanishes in the limit as $n\to\infty$.
\subsection{Main result}\label{sec:main}
Let $(X_t)_{t\geq 0}$ be a Markov chain on $\ds Z^d$ with generator $\mc L$ acting on  functions $f:\mathds{Z}^d\to \mathds{R}$ via
\begin{equation}\label{eq:gen}
(\mc Lf)(x)= \sum_{y\in \ds Z^d} a_{x,y}[f(y)-f(x)]\,.
\end{equation}
We denote the domain of $\mc L$ by $\mc D$ and we assume that the rates $(a_{xy})_{x,y\in \Z^d}$ satisfy the following six assumptions.
\begin{itemize}
	\item[$\rm{(A1)}$] The process $X=(X_t)_{t\geq 0}$ is a Feller process.
	\item[$\rm{(A2)}$] 	$\sup_{x\in \Z^d}\sum_{y\in \Z^d}a_{x,y} < \infty\,.$
	\item[$\rm{(A3)}$] For all $x,y\in \mathds{R}^d$ there exist $n\in \ds N$ and a sequence of points $x=x_0, x_1, \ldots, x_n= y$ such that for all $i\in\{0,1,\ldots, n-1\}$ we have that $a_{x_i,x_{i+1}}>0$.
	\item[$\rm{(A4)}$] For all $c,x,y\in\Z^d$ we have that $a_{x,y}= a_{x+c, y+ c}$.
	\item[$\rm{(A5)}$] There exists a positive function $(u_t)_{t\geq 0}$ satisfying $\lim_{t\to \infty} \tfrac1t \log u_t = 0$ such that
	\begin{equation*}
	\limsup_{t\to \infty} \frac1t \log \ds P\Big( \sup_{0\leq s\leq t} |X_s|\geq u_t\Big) = -\infty.
	\end{equation*}
	\item[$\rm{(A6)}$] The rates are symmetric in the sense that for all $x,y\in\Z^d$ one has that $a_{x,y}=a_{y,x}$.
\end{itemize}
\medskip
The third condition of course simply states that $(X_t)_{t\geq 0}$ is an irreducible Markov chain.
Next, we define the quantity of interest, namely, the \textit{normalized occupation measure}  $(L_t)_{t\geq 0}$ defined via
\begin{equation}
\label{eq:occupation_measure}
L_t(A)= \frac1t \int_0^t \one_A(X(s)) \, \dd s\,,\qquad A\subseteq \Z^d\,.
\end{equation}
Note that defined in this way, for each $t\geq 0$, the quantity $L_t$ is a random probability measure on $\Z^d$.
We recall that in~\cite{DonskerVaradhanIII}, under the assumptions $\rm{(A1)}$--$\rm{(A3)}$, it was shown that $(L_t)_{t\geq 0}$ satisfies a weak large deviation principle with respect to the usual weak convergence of probability measures with rate function $I$ defined on $\mc M_1(\Z^d)$ via
\begin{equation}
\label{eq:Iclassical}
I(\mu) = -\inf_{\substack{u> 0\\ u\in \mc D}} \int \Big( \frac{\mc L u}{u}\Big)(x) \mu(\dd x) =\big\Vert(-\mc L)^{\frac12} \sqrt{\mu}\big\Vert^2\
=\sum_{x,y\in\Z^d}a_{x,y}(\sqrt{\mu}(x)-\sqrt{\mu}(y))^2\,.
\end{equation}
where the norm appearing above is on the space $\ell^2(\Z^d)$. The second equality is thanks to Assumption~\Asix  and is a consequence of~\cite[I, Theorem 5]{DonskerVaradhanI}, while the last equality follows from a straightforward computation.
We note at this point that the above definition also makes sense for sub-probability measures $\mu\in \mc M_{\leq 1}(\Z^d)$ and we will make use of that without further mentioning it.
We remark that it might not be obvious to check from~\cite{DonskerVaradhanI} that Assumptions \Aone--\,\Athree are the correct assumptions to obtain the weak LDP. Therefore, in the forthcoming Subsection~\ref{subsec:weak} we will show that our assumptions are indeed the correct ones.

To proceed we extend the rate function $I$ to $\wtilde{\mc X}$. We define $\wtilde I:\wtilde{\mc X} \to \R$ for $\xi\in\wtilde{\mc X}$ via
\begin{equation}\label{eq:Inew}
\wtilde I(\xi)= \sum_{\wtilde\alpha\in\xi} I(\alpha)\,,
\end{equation}
where $\alpha$ is an arbitrary representative of $\wtilde\alpha$. This makes sense because we will show in Lemma~\ref{lem:PropertiesI} that, as a consequence of Assumption \rm{(A4)}, the rate function $I$ depends on $\mu$ only through its orbit. Finally denote by $\wtilde{L}_t$ the embedding of $L_t$ into $\wtilde{\mc X}$ and denote its law by $Q_t$.

Our main result then reads as follows.
\begin{theorem}
	\label{thm:main}
	Assume that Assumptions $\rm{(A1)}$--$\rm{(A6)}$ hold. Then, the family of measures $(Q_t)_{t\geq 0}$ defined on $\wtilde{\mc X}$ satisfy a large deviation principle with rate function $\wtilde I$ and rate $t$.
\end{theorem}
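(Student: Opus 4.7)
Since $(\wtilde{\mc X}, \mathbf{D})$ is compact by Theorem~\ref{thm:completion}, exponential tightness is automatic and it suffices to prove the two matching bounds: for every open $G\subseteq \wtilde{\mc X}$,
\begin{equation*}
\liminf_{t\to\infty} \tfrac1t \log Q_t(G) \geq -\inf_{\xi\in G} \wtilde I(\xi)\,,
\end{equation*}
and for every closed $F\subseteq \wtilde{\mc X}$, the reverse inequality for $\limsup$. I would start with preparatory properties of $\wtilde I$: translation invariance of $I$ (so that $\wtilde I$ is well defined via representatives, as flagged for Lemma~\ref{lem:PropertiesI}), lower semicontinuity of $\wtilde I$ with respect to $\mathbf{D}$, and goodness of its level sets (automatic from lsc plus compactness of $\wtilde{\mc X}$). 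The convexity of $\mu\mapsto I(\mu)=\|(-\mc L)^{1/2}\sqrt\mu\|_2^2$ in $\sqrt\mu$ will be used tacitly in all truncation arguments.

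\textbf{Lower bound.} By density of $\wtilde{\mc M}_1$ in $\wtilde{\mc X}$ it is enough to treat $\xi=\wtilde\mu$ with $\mu\in\mc M_1$ and $I(\mu)<\infty$. The key observation is that the quotient map $\mc M_1\to \wtilde{\mc X}$ is continuous from the weak topology to $\mathbf{D}$ (all test functions $f_r$ used in defining $\mathbf{D}$ are bounded and continuous), so any neighborhood $G$ of $\wtilde\mu$ in $\mathbf{D}$ pulls back to a weak neighborhood of some translate of $\mu$. Appealing to the Donsker--Varadhan weak LDP under Assumptions~\Aone--\Athree (reviewed in Subsection~\ref{subsec:weak}) yields $\liminf_t \tfrac1t\log Q_t(G)\ge -I(\mu)$. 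For a general $\xi=\{\wtilde\alpha_i\}_{i\in I}$ with $\wtilde I(\xi)<\infty$, truncate to finitely many components $\wtilde\alpha_1,\dots,\wtilde\alpha_N$ carrying mass $\geq 1-\eta$, choose representatives with compact supports by further approximation, translate them to be pairwise separated by a huge distance $R$, and normalize by adding a small, diffusely spread mass of size $\eta$. The resulting $\mu_{N,R,\eta}\in\mc M_1$ satisfies $I(\mu_{N,R,\eta})\to \sum_{i\le N} I(\alpha_i)$ and $\wtilde\mu_{N,R,\eta}\to\xi$ as $R\to\infty$, $\eta\to 0$, $N\to\infty$, giving the lower bound at $\xi$.

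\textbf{Upper bound.} This is the main obstacle and where the Mukherjee--Varadhan machinery and Assumptions~\Afour, \Afive really enter. By compactness of $\wtilde{\mc X}$ and lsc of $\wtilde I$, a finite-cover argument reduces the claim to a local upper bound: for every $\xi\in\wtilde{\mc X}$ and every $\epsilon>0$ there is a $\mathbf{D}$-ball $B(\xi,\delta_\xi)$ with
\begin{equation*}
\limsup_{t\to\infty}\tfrac1t \log Q_t\bigl(B(\xi,\delta_\xi)\bigr)\leq -\wtilde I(\xi)+\epsilon\,.
\end{equation*}
To prove this, first use Assumption~\Afive to restrict the event to trajectories staying in a box of size $u_t$ at negligible logarithmic cost. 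Given a truncation $\{\wtilde\alpha_1,\dots,\wtilde\alpha_N\}$ of $\xi$, the event $\wtilde L_t\in B(\xi,\delta)$ forces the occupation measure to concentrate mass $\approx \alpha_i(\Z^d)$ in $N$ clusters centered at (random) points $x_i(t)$ with pairwise distances eventually larger than any prescribed scale. Condition on the cluster centers and, using translation invariance (Assumption~\Afour), recenter each cluster at the origin: the random walk observed on each cluster is then a copy of $X$ whose sub-normalized occupation measure must be $\mathbf{D}$-close to $\wtilde\alpha_i$ in a weak neighborhood on a large box. A Donsker--Varadhan type upper bound on that box, combined with the near-independence of contributions from well-separated clusters and the negligible exponential cost of the connecting excursions (they have sub-linear total duration), delivers $\exp\bigl(-t\sum_{i\le N} I(\alpha_i)+o(t)\bigr)$. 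Letting $N\to\infty$ and $\delta\to 0$ recovers $\wtilde I(\xi)$.

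The hard step is the last one: the random walk is ``connected'' in the sense that moving between far clusters requires many jumps, and one must verify that these inter-cluster excursions use a vanishing fraction of the time budget while matching the exact Donsker--Varadhan cost within each cluster. This is where Assumptions~\Afour (translation invariance), \Afive (polynomially confined trajectories), and the symmetry~\Asix (for the tight identification of the rate function via the Dirichlet form) jointly enter. Once the local upper bound is established, a finite subcover of any closed $F\subseteq\wtilde{\mc X}$ by such balls, combined with the union bound, completes the proof.
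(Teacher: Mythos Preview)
Your lower bound argument matches the paper's closely: both reduce to the single-orbit case via continuity of the quotient map $\mc M_1\to\wtilde{\mc X}$ and the classical Donsker--Varadhan lower bound, and then approximate a general $\xi$ by probability measures built from finitely many well-separated components plus a diffuse remainder, checking that $I$ on these approximants converges to $\wtilde I(\xi)$.

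Your upper bound, however, diverges from the paper's and contains a genuine gap at precisely the point you flag as ``the hard step.'' The plan to condition on cluster centers, recenter, and invoke \emph{near-independence of contributions from well-separated clusters} does not come for free: the random walk is a single connected trajectory, and decomposing the occupation measure into cluster pieces does not decompose the \emph{path}. To carry out your programme one would need an approximate-factorization statement for $\mathbb P(L_t\in\cdot)$ over the time intervals spent in different clusters, together with a proof that the inter-cluster excursions consume $o(t)$ time and contribute nothing at the exponential scale. Neither of these is available from the hypotheses, and this kind of path surgery is exactly what the Mukherjee--Varadhan machinery is designed to avoid.

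The paper bypasses the difficulty by a purely \emph{functional} argument in the spirit of the Donsker--Varadhan variational formula. For test functions of the form
\[
g(x)=c+\sum_{i=1}^k u_i(x+a_i)\,\varphi\Bigl(\tfrac{x+a_i}{R}\Bigr)
\]
supported on $k$ well-separated balls, the Feynman--Kac identity yields $\mathbb E\bigl[\exp\int_0^t (-\mc Lg/g)(X_s)\,\dd s\bigr]\le g(x_0)/c$, uniformly in the centers $a_i$. Assumption~\Afive enters only to control the \emph{number} of relevant choices of $(a_1,\dots,a_k)$: once the walk is confined to a box of side $u_t$, there are at most $(2u_t)^{dk}$ choices, a sub-exponential factor, so the supremum over centers can be pulled inside the expectation at negligible logarithmic cost. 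A separate lower-semicontinuity lemma then shows that this translation-invariant functional $\wtilde F$, maximized over the centers and then over $k,R,c,u_1,\dots,u_k$, recovers $\wtilde I(\xi)$. The exponential Chebyshev inequality with $\wtilde F$ gives the local upper bound directly; no path decomposition, no conditioning on cluster locations, and no near-independence claim is needed. Your use of \Afive to confine the trajectory is on target, but its role in the paper is combinatorial (counting centers) rather than geometric (isolating clusters).
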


Before we come to the end of that section we will shortly discuss our assumptions. As mentioned above, Assumptions $\rm{(A1)}$--$\rm{(A3)}$ will guarantee that the family of empirical measures satisfies a weak large deviation principle in the usual weak topology in $\mc M_1$.  Note that
the topology of Mukherjee-Varadhan is applied to contexts in which the model at hand exhibits a shift invariance in space. In particular, the large deviation principle in $\wtilde{\mc X}$ can then be seen as an extension of the weak large deviation principle on $\mc M_1$, and one expects the new large deviation rate function to be related to the one on $\mc M_1$. It is therefore natural to consider models in which the rate function $I$ governing the weak large deviation principle actually only depends on the orbit of the measure at hand. Our Assumption~$\rm{(A4)}$ guarantees exactly that. Regarding Assumption~$\rm{(A5)}$ it should be satisfied in almost all examples and simply states that it is very costly for the random walk to travel exponentially large distances. This will come in handy for a coarse graining argument that will be used to establish the large deviation upper bound. Finally, we believe that Theorem~\ref{thm:main} should hold also without the Assumption~$\rm{(A6)}$. However, without Assumption~$\rm{(A6)}$ the second equality in~\eqref{eq:Iclassical} does not hold which makes the analysis  way more delicate.

\subsection{Structure of the paper}
In Section~\ref{sec:proof} we present the proofs of Theorems~\ref{thm:completion} and~\ref{thm:main}. To that end, we  first show in Section~\ref{subsec:weak} that our assumptions yield a weak large deviation principle in the usual weak topology in $\mc M_1$. In Section~\ref{S: 2.1} we  prove Theorem~\ref{thm:completion}. Then, in Section~\ref{subsec:weak} we  prove that our assumptions imply a weak large deviation principle in the usual weak topology in $\mc M_1(\Z^d)$. In the remaining subsection of Section~\ref{sec:proof} we  complete the proof of Theorem~\ref{thm:main}. Finally, in Section~\ref{sec:applications} we provide two  applications of our main result.

\section{Proof}\label{sec:proof}
In this section we prove Theorems~\ref{thm:main}. Recall that to establish a large deviation principle three properties need to be established. The lower semi-continuity of the rate function, the lower bound for open sets and the upper bound for closed sets. Our first task is to check that our hypothesis indeed imply the hypothesis of  Donsker and Varadhan \cite{DonskerVaradhanIII}, which in its hand imply a weak LDP, that is, a large deviations principle for compact sets in the usual topology of $\mc M_1(\Z^d)$.

\subsection{Adaptation of the proof of Theorem~\ref{thm:completion} and auxiliary results}\label{S: 2.1}

We start with the  proof of Theorem~\ref{thm:completion}, which is an adaptation of the proof of \cite[Theorem~3.2]{MV2016}. It is divided in  two steps:\medskip

\textit{Step 1.} $\wtilde{\mc M}_1$ is dense in $\wtilde{\mc X}$. \medskip

\textit{Step 2.} Any sequence $(\mu_n)_n$ has a subsequence that converges to some $\xi \in \wtilde{\mc X}$.\medskip

We start with Step 1. Let $\xi\in\wtilde{\mc X}$.
We are going to show that there exists a sequence of $(\wtilde{\mu}_n)$ of orbits in $\wtilde{\mc M}_1(\Z^d)$ such that $\wtilde{\mu}_n$ converges to $\xi$ in $\wtilde{\mc X}$.
Let $\xi=\{\wtilde{\alpha}_i:\, i\in I\}\in\wtilde{\mc X}$. As argued in~\cite[Theorem 3.2]{MV2016} we can assume that $\xi$ is a finite collection of orbits of sub-probability measures. For any $i\in I$ define $p_i=\alpha_i(\Z^d)$ and choose spatial points $(a_i)_{i\in I}=(a_i(n))_{i\in I}$ such that $\inf_{i\neq j}|a_i-a_j|\to\infty$ as $n\to\infty$. Also, let $\nu_n$ denote the uniform distribution of $[-n,n]^d\cap\Z^d$, i.e.,
\begin{equation}\label{eq:nun}
\begin{aligned}
\nu_n(x)=\frac{1}{(2n+1)^d}\begin{cases}
1, &\text{if }x\in [-n,n]^d\cap\Z^d\,,\\
0, &\text{otherwise.}
\end{cases}
\end{aligned}
\end{equation}
We then note that for any $f\in\mc F_k$,
\begin{equation*}
\lim_{n\to\infty}\int f(x_1,\ldots, x_k)\prod_{i=1}^k\nu_n(\dd x_i)=0\,.
\end{equation*}
Indeed, this is a consequence of the fact that $(\nu_n)$ totally disintegrates in the sense that, for any $r>0$,
\begin{equation*}
\lim_{n\to\infty}\sup_{x\in\Z^d}\nu_n(B(x,r))=0\,,
\end{equation*}
and Lemma 2.3 in~\cite{MV2016} whose proof carries over \textit{mutatis mutandis} to the present context.
We then define
\begin{equation*}
\mu_n=\sum_{i\in I} \alpha_i*\delta_{a_i} + \Big(1-\sum_{i\in I}p_i\Big)\nu_n\,.
\end{equation*}
Since $f\in\mc F_k$ vanishes whenever the distance of two coordinates tends to infinity and $\inf_{i\neq j}|a_i-a_j|\to\infty$  we see that
\begin{equation*}
\lim_{n\to\infty}\int f(x_1,\ldots, x_k)\prod_{i=1}^k\mu_n(\dd x_i)
=\sum_{i\in I}\int f(x_1,\ldots, x_k)\prod_{i=1}^k\alpha_i(\dd x_i)\,.
\end{equation*}
This shows that the sequence of orbits $(\wtilde{\mu}_n)$ converges to $\xi$ in $\wtilde{\mc X}$, and therefore $\wtilde{\mc M}_1$ is dense in $\wtilde{\mc X}$.

The second step of the proof in~\cite{MV2016} shows that every sequence $(\wtilde{\mu}_n)$ has a converging subsequence in $\wtilde{\mc X}$. The proof in the discrete context works  without any major adaptations, relying fundamentally on the following  lemma (\cite[Lemma 2.2]{MV2016}).
\begin{lemma}
	\label{lem:decomposition}
	Let $(\mu_n)_{n\in \bb N}$ be a sequence of sub-probability measures  that converges vaguely to a sub-probability measure $\alpha$, then  each $\mu_n$ can be written as $\mu_n=\alpha_n+\beta_n$. Here $(\alpha_n)_{n\in \bb N}$ converges in the weak topology to $\alpha$ and $(\beta_n)_{n\in \bb N}$ converges in the vague topology to zero. Moreover, $\alpha_n$ and $\beta_n$ can be chosen to have disjoint supports and such that the support of each $\alpha_n$ is compact.
\end{lemma}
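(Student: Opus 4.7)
My plan is to build the decomposition by restricting $\mu_n$ to a sequence of finite subsets of $\Z^d$ that grow with $n$ just slowly enough to absorb the mass that does not escape. Let $K_j=B(0,j)\cap\Z^d$ for $j\in\bb N$. Since vague convergence on $\Z^d$ is equivalent to pointwise convergence of the masses of singletons, for every fixed $j$ one has $\mu_n(K_j)\to\alpha(K_j)$ as $n\to\infty$. For each $k\in\bb N$ I would choose $N_k\in\bb N$ with $|\mu_n(K_j)-\alpha(K_j)|<2^{-k}$ for all $n\geq N_k$ and all $j\in\{1,\dots,k\}$, and then set $k(n):=\max\{k\,:\,N_k\leq n\}$, which satisfies $k(n)\to\infty$.

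The natural candidates are then $\alpha_n:=\mu_n|_{K_{k(n)}}$ and $\beta_n:=\mu_n|_{\Z^d\setminus K_{k(n)}}$. With this choice $\mu_n=\alpha_n+\beta_n$, the two measures have disjoint supports by construction, and $\Supp(\alpha_n)\subseteq K_{k(n)}$ is a finite, hence compact, subset of $\Z^d$. For the vague convergence $\beta_n\to 0$ I would simply observe that any finite set $F\subseteq\Z^d$ is eventually contained in $K_{k(n)}$, so $\beta_n(F)=0$ for all sufficiently large $n$, whence $\beta_n(\{x\})\to 0$ for every $x$.

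The main step is to verify that $\alpha_n\to\alpha$ in the weak topology. Since we are on a Polish space, it suffices to check vague convergence together with convergence of the total masses $\alpha_n(\Z^d)\to\alpha(\Z^d)$. Vague convergence follows as for $\beta_n$: for any finite $F$ one has $\alpha_n(F)=\mu_n(F)$ eventually, and the latter converges to $\alpha(F)$. For the total masses, $\alpha_n(\Z^d)=\mu_n(K_{k(n)})$, and the diagonal choice of $k(n)$ gives $|\mu_n(K_{k(n)})-\alpha(K_{k(n)})|<2^{-k(n)}\to 0$, while $\alpha(K_{k(n)})\to\alpha(\Z^d)$ by monotone convergence because $\alpha$ is finite. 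Combining these yields $\alpha_n(\Z^d)\to\alpha(\Z^d)$, as required.

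I do not expect any real obstacle here; the only point deserving care is the diagonalization coupling the growth of $K_{k(n)}$ to the speed at which $\mu_n$ approaches $\alpha$ on each fixed $K_j$. The proof in \cite{MV2016} carried out for $\R^d$ transfers verbatim to the present discrete setting because every compact subset of $\Z^d$ is automatically finite, so vague convergence reduces to pointwise convergence of the masses of singletons and there is no subtlety coming from continuity sets of the limit measure.
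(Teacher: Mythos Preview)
Your proof is correct and is exactly the standard argument behind \cite[Lemma~2.2]{MV2016}, which is all the paper invokes here: it does not give an independent proof but simply notes that the statement follows directly from the proof in~\cite{MV2016}, the discrete setting only simplifying matters. Your diagonal choice of growing finite sets $K_{k(n)}$, restriction $\alpha_n=\mu_n|_{K_{k(n)}}$, and the Scheff\'e-type step (vague convergence plus convergence of total masses) reproduce that argument, so there is nothing to add.
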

The above formulation of Lemma~\ref{lem:decomposition} is a bit more general than in its original version in~\cite{MV2016} but its statement follows directly from the proof of~\cite[Lemma 2.2]{MV2016}. We also refer to~\cite{EP2023} where this was already noted.
Before we formulate the next result we will provide one more definition.
\begin{definition}
	Two sequences $(\alpha_n)_{n\in \bb N}$ and $(\beta_n)_{n\in \bb N}$ of sub-probability measures on $\Z^d$ are widely separated if for any function $W:\Z^d\to\R$ that vanishes at infinity
	\begin{equation*}
	\lim_{n\to\infty}\int W(x-y)\,\alpha_n(\dd x)\beta_n(\dd y) =0\,.
	\end{equation*}
\end{definition}
We note that, apart of the discrete scenario,  the above definition differs from the one given in~\cite[Subsection~2.4]{MV2016}, which requires $W$ to be strictly positive. However, as one can see in \cite[Lemma 2.4]{MV2016}, this condition of strictly positivity can be dropped, and  our definition is equivalent to theirs.
Lemma~\ref{lem:decomposition} together with the proof of the second step in~\cite[Theorem 3.2]{MV2016} then yields the following corollary.
\begin{corollary}\label{cor:decomposition}
	Let $(\wtilde{\mu}_{n})$ be a sequence in $\widetilde{\mc X}$, converging to $\xi =\{\wtilde{\alpha}_i\}_{i\in I}\in\widetilde{\mc X}$. Then there exists a sub-sequence which we also denote by $(\wtilde{\mu}_{n})$ such that for any $k\leq |I|$ we can write
	\begin{align*}
	\mu_n=\sum_{i=1}^{k}\alpha_{n,i}+\beta_n
	\end{align*}
	so that
	\begin{itemize}
		\item $(\alpha_{n,i})_{n\in \bb N},   i=1,\ldots, k,$ and $(\beta_n)_{n\in \bb N}$ are sequences of sub-probability measures in $\Z^d$;
		\item for each $i=1, \ldots, k$ there are sequences $(a_{n,i})_{n \in \bb N}$ of elements of $\Z^d$  such that
		\begin{align*}
		&\alpha_{n,i}\ast\delta_{a_{n,i}}\Rightarrow\alpha_i\in \wtilde{\alpha}_i,\quad  \text{ as } n\to \infty\,,\\
		&\lim_{n\to \infty}\min_{i\neq j} \vert a_{n,i}-a_{n,j}\vert=\infty\,,
		\end{align*}
		and $(\beta_n)_{n\in \bb N}$ is widely separated from each $(\alpha_{n,i})_{n\in \bb N}$;
		\item 	The supports of $\alpha_{n,1}, \alpha_{n,2}, \ldots, \beta_n$ are all disjoint and for each $i$ there exists a sequence $ (R_{n,i})_{n\in \bb N}$ tending to infinity such that
		$$\Supp (\alpha_{n,i})\subset B(-a_{n,i}, R_{n,i})$$
		and
		$$\Supp(\beta_{n,i})\subset \big[\cup_i B(-a_{n,i}, R_{n,i})]^\complement.$$
	\end{itemize}
\end{corollary}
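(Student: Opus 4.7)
The plan is to iterate Lemma~\ref{lem:decomposition}, peeling off one mass lump of $\mu_n$ at each step, and then to diagonalize so that a single subsequence works for every $k\leq |I|$. Fix arbitrary representatives $\mu_n$ of $\wtilde\mu_n$, and enumerate the orbits so that $p_1\geq p_2\geq\cdots$, where $p_i=\alpha_i(\Z^d)$.

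For the first step, pick $R$ large enough that $\alpha_1(B(0,R))>p_1-\eps$, and choose $a_{n,1}\in\Z^d$ maximising $\mu_n(B(-a_{n,1},R))$. The convergence $\wtilde\mu_n\to\xi$ in the metric ${\bf D}$ forces this maximal local mass to be close to $p_1$ for large $n$. By vague compactness of sub-probability measures on $\Z^d$, a subsequence makes $\mu_n*\delta_{a_{n,1}}$ converge vaguely to a representative $\tilde\alpha_1$ of $\wtilde\alpha_1$; Lemma~\ref{lem:decomposition} decomposes the shifted sequence as $\alpha'_{n,1}+\beta'_n$, with $\alpha'_{n,1}$ converging weakly to $\tilde\alpha_1$, $\beta'_n$ converging vaguely to zero, disjoint supports, and $\Supp(\alpha'_{n,1})$ contained in a compact set of radius $R_{n,1}$. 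Shifting back defines $\alpha_{n,1}$ and the residual $\beta_n^{(1)}=\mu_n-\alpha_{n,1}$.

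I then iterate on $\beta_n^{(1)}$ with the same recipe: choose $a_{n,2}$ as the location of the next largest lump of $\beta_n^{(1)}$ in balls of radius $R$. Since $\beta_n^{(1)}*\delta_{a_{n,1}}\to 0$ vaguely, necessarily $|a_{n,2}-a_{n,1}|\to\infty$, and a further subsequence gives vague convergence to a representative of $\wtilde\alpha_2$. Lemma~\ref{lem:decomposition} produces $\alpha_{n,2}$ and a new residual $\beta_n^{(2)}$; inductively one obtains $\alpha_{n,1},\ldots,\alpha_{n,k}$ and a final residual $\beta_n=\beta_n^{(k)}$. By a diagonal extraction over $k=1,2,\ldots,|I|$ one obtains a single subsequence valid for every $k$, and the radii $R_{n,i}$ are those produced by Lemma~\ref{lem:decomposition} at the $i$-th step (shrunk, if needed, so that $R_{n,i}\ll \min_{j\neq i}|a_{n,i}-a_{n,j}|$).

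The main obstacle is verifying the wide separation of $\beta_n$ from each $(\alpha_{n,i})$. By construction $\Supp(\alpha_{n,i})\subset B(-a_{n,i},R_{n,i})$ while $\Supp(\beta_n)$ lies in the complement of $\cup_i B(-a_{n,i},R_{n,i})$; combined with $\min_{i\neq j}|a_{n,i}-a_{n,j}|\to\infty$ and the freedom to enlarge $R_{n,i}$ only slowly, for any $W:\Z^d\to\R$ vanishing at infinity the integral $\int W(x-y)\,\alpha_{n,i}(\dd x)\beta_n(\dd y)$ tends to zero, since either $|x-y|$ is forced to diverge or the mass of $\beta_n$ on the relevant region vanishes. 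A second subtle point is the identification of the vague limit at each extraction step with the prescribed component $\wtilde\alpha_i$: here one uses the convergence ${\bf D}(\wtilde\mu_n,\xi)\to 0$ and tests against functions $f\in\mc F$ localised near the diagonal of radius $R$, so that the contribution of the $i$-th lump is matched exactly and no mass is exchanged between the orbits $\wtilde\alpha_i$.
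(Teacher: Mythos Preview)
Your approach is essentially the one the paper points to: the paper does not give a self-contained argument but simply states that the corollary follows from Lemma~\ref{lem:decomposition} combined with the proof of Step~2 of \cite[Theorem~3.2]{MV2016}, and your iterative ``peel off the largest lump via the concentration function, apply Lemma~\ref{lem:decomposition}, pass to the residual, diagonalize over $k$'' is exactly that argument. Two small remarks: the cleanest way to justify wide separation is not via the support inclusions alone but via the fact that $\beta_n*\delta_{a_{n,i}}\to 0$ vaguely while $\alpha_{n,i}*\delta_{a_{n,i}}$ converges weakly (this is how \cite[Lemma~2.4]{MV2016} proceeds and is what your last paragraph is implicitly using); and the identification of the vague limit at step $i$ with $\wtilde\alpha_i$ is in \cite{MV2016} done through the concentration function (showing the extracted lump has mass exactly $p_i$), which is slightly more direct than testing against localized $f\in\mc F$, though both routes are valid.
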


We finish this section with one more result which is a consequence of the proof in~\cite[Theorem~3.2]{MV2016}.
\begin{lemma}
	\label{lem:convergenceinM1tilde}
	Assume that the sequence $(\wtilde{\alpha}_n)$ converges in $\wtilde{\mc M}_1$ to $\wtilde{\alpha}\in\wtilde{\mc M}_1$. Then $(\wtilde{\alpha}_n)$ converges in $\wtilde{\mc X}$ to $\wtilde{\alpha}$.
\end{lemma}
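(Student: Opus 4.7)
The plan is to unfold the definitions and reduce everything to ordinary weak convergence of probability measures on $\Z^d$. Since $\wtilde{\alpha}_n$ converges to $\wtilde{\alpha}$ in $\wtilde{\mc M}_1$, the orbit convergence allows us to select representatives: there exist $\alpha_n \in \wtilde{\alpha}_n$ and $\alpha \in \wtilde{\alpha}$ (equivalently, suitable shifts $x_n \in \Z^d$) such that $\alpha_n \Rightarrow \alpha$ weakly in $\mc M_1(\Z^d)$. Observe that because of the diagonal translation invariance of every $f_r \in \mc F_{k_r}$, the quantity $\Lambda(f_r,\alpha_n)$ is independent of the choice of representative, so it is unambiguous to work with these particular $\alpha_n$ and $\alpha$ when evaluating $\mathbf{D}(\wtilde{\alpha}_n,\wtilde{\alpha})$.

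Next I would verify that $\Lambda(f_r,\alpha_n) \to \Lambda(f_r,\alpha)$ for every fixed $r$. Each $f_r$ lies in $\mc F_{k_r}$, hence is bounded (it vanishes at infinity off the diagonal and is defined on the discrete space $(\Z^d)^{k_r}$, which forces boundedness together with automatic continuity). Weak convergence $\alpha_n \Rightarrow \alpha$ on $\Z^d$ implies weak convergence of product measures $\alpha_n^{\otimes k_r} \Rightarrow \alpha^{\otimes k_r}$ on $(\Z^d)^{k_r}$. Integrating the bounded continuous function $f_r$ against these measures gives exactly the convergence of $\Lambda(f_r,\alpha_n)$ to $\Lambda(f_r,\alpha)$.

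Finally I would sum in $r$. Since the singletons $\{\wtilde{\alpha}_n\}$ and $\{\wtilde{\alpha}\}$ lie in $\wtilde{\mc X}$, the metric reads
\begin{equation*}
\mathbf{D}(\wtilde{\alpha}_n,\wtilde{\alpha}) \;=\; \sum_{r\geq 1} \frac{1}{2^r}\,\frac{1}{1+\|f_r\|_\infty}\,\bigl|\Lambda(f_r,\alpha_n)-\Lambda(f_r,\alpha)\bigr|.
\end{equation*}
Each summand tends to $0$ by the previous step, and the entire series is dominated term-wise by $2^{1-r}$, which is summable. Dominated convergence then yields $\mathbf{D}(\wtilde{\alpha}_n,\wtilde{\alpha}) \to 0$, i.e.\ convergence in $\wtilde{\mc X}$.

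I do not expect a serious obstacle here; the proof is a routine unfolding of the definitions. The only subtle point is the initial selection of representatives, which must be handled so that weak convergence in $\mc M_1$ is genuinely available; once this is justified (using the fact that convergence of orbits in $\wtilde{\mc M}_1$ amounts to weak convergence up to shifts, by the quotient structure), the rest is continuous-mapping-type reasoning combined with a standard dominated convergence on the metric series.
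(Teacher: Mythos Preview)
Your argument is correct. The paper does not actually give a proof: it simply points to ``the analysis of the case $q=p$'' in the proof of \cite[Theorem~3.2]{MV2016}, i.e.\ the concentration--compactness step where the full mass of the sequence is recovered by a single orbit, and omits the details. Your route is more direct and self-contained: instead of invoking that machinery, you pick representatives $\alpha_n\Rightarrow\alpha$ (which the quotient structure of $\wtilde{\mc M}_1$ supplies), use that each $f_r$ is bounded---as you note, translation invariance reduces $f_r$ to a function on $(\Z^d)^{k_r-1}$ vanishing at infinity, hence bounded on the discrete lattice---pass to product measures, and finish by dominated convergence on the series defining~$\mathbf D$. This buys you an argument that is essentially a two-line unfolding of definitions and avoids importing the decomposition lemma; the paper's deferral, by contrast, places the lemma inside a larger structural argument that handles the general limit in $\wtilde{\mc X}$, of which this is the trivial case. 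The only place to be slightly more explicit is your first step: say precisely that convergence in the quotient $\wtilde{\mc M}_1$ (with the quotient topology, equivalently the metric $\inf_{x\in\Z^d} d(\mu,\nu*\delta_x)$ for a translation-invariant $d$ metrizing weak convergence) yields shifts $x_n$ with $\alpha_n*\delta_{x_n}\Rightarrow\alpha$, so that the representatives are genuinely available.
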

The proof of this result is a consequence of the analysis of the case $q=p$ in~\cite[Proof of Theorem~3.2]{MV2016} and it will be omitted here.
\subsection{Weak LDP}\label{subsec:weak}

We start by recalling the assumptions in~\cite{DonskerVaradhanIII}, keeping the original notation.
Denote by $p(t,x,\dd y)$ the transition probability of the Markov process $(X_t)_{t\geq 0}$ under consideration whose state space is a Polish space $X$. We will denote its law by $\ds P_x$ when started at $x$, and the corresponding expectation will be denoted by $\ds E_x$. The following four assumptions were put in place:
\begin{enumerate}
	\item[$\rm{(DV1)}$] The transition probability $p(t,x,\dd y)$ is Feller, having a density $p(t,x,y)$ with respect to a reference measure $\beta(\dd y)$.
	\item [$\rm{(DV2)}$] Denote by $(T_t)_{t\geq 0}$ the semi-group of $X$. Define
	\begin{align*}
	B_0=\{f\in C(X):\lim_{t\to 0}\|T_tf-f\|=0\},
	\end{align*}
	and
	\begin{align*}
	B_{00}=\{ f\in C(X):\lim_{t\to 0}\sup_x \ds E_x\big[|f(X_t)-f(x)|\big]=0  \}.
	\end{align*}
	Moreover, for $\alpha\in \mathcal{M}_1(X)$ such that $I(\alpha)<\infty$, $k\in \mathds{N}$ and fixed $f_1, \ldots, f_k\in B_{00}$ define a neighbourhood $\mathcal{N}_\alpha$ of $\alpha$ in $\mathcal{M}_1(X)$ via
	\begin{align*}
	\mathcal{N}_{\alpha}=\Biggl\{\mu \in \mathcal{M}_1(X): \bigg|\int_{X}f_j(x)[\mu(dx)-\alpha(dx)]\bigg|<\epsilon, 1\leq j\leq k\Biggr\}\,.
	\end{align*}
	It was then assumed that $p(t,x,\dd y)$ is such that every neighboorhod of $\alpha \in \mathcal{M}_1(X)$ contains a neighborhood of the form $\mc N_\alpha$.
	\item[$\rm{(DV3)}$] For any $\mu\in \mathcal{M}_1(X)$ and any bounded measurable function $f$, there exists a sequence $(f_n)_{n\in \bb N}\in B_0^{\bb N}$ such that $\|f_n\|\leq\|f\|$ for all $n\in\bb N$ and such that $f_n\to f$ almost everywhere with respect to $\mu$\,.
	\item[$\rm{(DV4)}$]  For every $x\in X, \sigma>0$ and $E\subset X$ such that $\beta(E)>0$, it holds that
	\begin{align*}
	\int_{0}^{\infty}\exp(-\sigma t)p(t,x,E)\dd t>0\,,
	\end{align*}
	where $p(t,x,E)$ is defined via
	\begin{equation*}
	p(t,x,E)= \ds P_x(X_t\in E)\,.
	\end{equation*}
\end{enumerate}
%Under these hypotheses they prove a weak LDP, i.e., for any compact set $K \subset \M$
%\begin{align}
%	\limsup_{t\to\infty}\dfrac{1}{t}\log Q_{t,x}(K)\leq-\inf_{\mu\in K}I(\mu)
%\end{align}
%and for any open set $A\subset\M$
%\begin{align}
%		\liminf_{t\to\infty}\dfrac{1}{t}\log Q_{t,x}(A)\geq-\inf_{\mu\in A}I(\mu).
%\end{align}

%Our goal is to prove a strong LDP (or simply a LDP) for a Markov chain $(X_t)_{t\geq 0}$ on $\Z^d$ under our three conditions $\rm{A1}, \rm{A2}$ and $\rm{A3}$.
\medskip
\noindent
Below we argue why our assumptions $\rm{(A1)}, \rm{(A2)}$ and $\rm{(A3)}$ indeed imply $\rm{(DV1)}, \rm{(DV2)}, \rm{(DV3)}$ and $\rm{(DV4)}$:
\begin{description}
	\item[$\rm{(A1)}$ implies $\rm{(DV1)}$]
	The Feller property is clear by assumption and the reference measure $\beta$ is in our case simply the counting measure since our state space is discrete. Hence, $\rm{(DV1)}$ holds.
	\item[$\rm{(A2)}$ implies $\rm{(DV2)}$]
	We note that the usual topology of weak convergence of probability measures is induced by testing against continuous, bounded functions. Therefore, it remains to argue that $B_{00}$ coincides with $C_b(\Z^d)$ in our case. Fix $f\in C_b(\Z^d)$, assume that $X_0=x$ and define the first jump time via
	\begin{equation*}
	T_1=\inf\{t>0: X_t\neq x\}\,.
	\end{equation*}
	Since $T_1$ has an exponential distribution with parameter $\sum_{y}a_{xy}$ we can estimate
	\begin{eqnarray*}
		\mathds{E}_x\Big[|f(X_t)-f(x)|\Big]&=&\mathds{E}_x\Big[\vert f(X_t)-f(x)\vert \mathds{1}_{\{T_1>t\}}\Big]+\mathds{E}_x\Big[\vert f(X_t)-f(x)\vert\mathds{1}_{\{T_1\leq t\}}\Big] \\
		&\leq & 0+2\|f\|_\infty \mathds{P}_x(T_1\leq t)\\
		&=&2\|f\|_\infty\sum_ya_{xy}\int_{0}^{t}\exp\Big\{-s\sum_ya_{xy}\Big\}ds\,.
	\end{eqnarray*}
	By $\rm{(A2)}$ the latter goes to zero uniformly in $x$ as $t\to 0$. Thus, $f\in B_{00}$ and $\rm{(DV2)}$ holds.
	\item[$\rm{(A2)}$ implies $\rm{(DV3)}$]
	Since the state space $\Z^d$ is a discrete space, we have that every function $f:\Z^d \to \R$ is continuous, and if this function is bounded then by the above arguments it clearly belongs to $B_0$. Thus, it is sufficient to take $f_n=f$, and $\rm{(DV3)}$ follows.
	\item[$\rm{(A3)}$ implies $\rm{(DV4)}$]
	This is a direct consequence of the fact that $\rm{(A3)}$ simply states that $(X_t)_{t\geq 0}$ is irreducible.
\end{description}

\subsection{Lower bound}
In this section we give the proof of the large deviation lower bound. More precisely, we will show the following result.
\begin{proposition}[Lower bound]\label{lem:lowerbound}
	For any open set $G\in \widetilde{\mc X}$
	\begin{align}\label{LB}
	\liminf_{t\to\infty}\dfrac{1}{t}\log Q_t(G)\geq-\inf_{\xi\in G}\widetilde{I}(\xi).
	\end{align}
\end{proposition}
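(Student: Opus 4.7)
The plan is to reduce the global lower bound to a pointwise lower bound at each $\xi\in G$ with $\widetilde{I}(\xi)<\infty$ (the only nontrivial case), and to import the weak LDP of Subsection~\ref{subsec:weak} along the quotient map $\mu\mapsto\widetilde\mu$. Fix such a $\xi$ and an open neighbourhood $U\subseteq G$ of $\xi$ in $\widetilde{\mc X}$. I will exhibit a sequence $(\mu_n)_{n\in\bb N}\subset\mc M_1$ with $\widetilde\mu_n\to\xi$ in $\widetilde{\mc X}$ and $\limsup_n I(\mu_n)\leq\widetilde{I}(\xi)$. Once such a sequence is in hand, Lemma~\ref{lem:convergenceinM1tilde} implies continuity of the quotient map from $(\mc M_1,\text{weak})$ to $(\widetilde{\mc X},\mathbf{D})$, so $V_n:=\{\nu\in\mc M_1:\widetilde\nu\in U\}$ is weakly open in $\mc M_1$ and contains $\mu_n$ for all large $n$. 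Because $V_n$ is translation invariant and, by Assumption~\Afour, the law of $L_t$ is translation covariant, one has $Q_t(U)=\bb P_0(L_t\in V_n)$; the weak LDP then gives $\liminf_{t\to\infty}t^{-1}\log Q_t(U)\geq -I(\mu_n)$, and letting $n\to\infty$ followed by the infimum over $\xi\in G$ yields~\eqref{LB}.

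To construct $(\mu_n)$ I would write $\xi=\{\widetilde\alpha_i\}_{i\in I}$. A short truncation argument using $\xi_N:=\{\widetilde\alpha_i\}_{i\leq N}$ (which converges to $\xi$ in $\widetilde{\mc X}$ and satisfies $\widetilde I(\xi_N)\leq \widetilde I(\xi)$) lets us assume $|I|=k<\infty$. I would then reuse the construction of Step~1 in the proof of Theorem~\ref{thm:completion}, namely
\begin{equation*}
\mu_n\;:=\;\sum_{i=1}^k \alpha_i\ast\delta_{a_i(n)}\;+\;\Big(1-\sum_{i=1}^k p_i\Big)\nu_n,
\end{equation*}
where $\inf_{i\neq j}|a_i(n)-a_j(n)|\to\infty$ and $\nu_n$ is the uniform measure on $[-n,n]^d\cap\Z^d$ suitably translated away from the $\alpha_i$-blocks. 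The convergence $\widetilde\mu_n\to\xi$ in $\widetilde{\mc X}$ is then exactly what is proved in Step~1 of Theorem~\ref{thm:completion}.

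The rate function bound will follow from three elementary properties of~\eqref{eq:Iclassical}: translation invariance $I(\alpha\ast\delta_a)=I(\alpha)$ from Assumption~\Afour, positive homogeneity $I(c\alpha)=cI(\alpha)$, and the subadditivity $I(\alpha+\beta)\leq I(\alpha)+I(\beta)$ for arbitrary sub-probability measures $\alpha,\beta$ (an immediate consequence of the pointwise Cauchy--Schwarz bound $\sqrt{\alpha(x)\alpha(y)}+\sqrt{\beta(x)\beta(y)}\leq\sqrt{(\alpha(x)+\beta(x))(\alpha(y)+\beta(y))}$). Combining these reduces the task to the elementary estimate $I(\nu_n)\to 0$, which I would verify by a direct boundary count: by Assumption~\Afour only pairs $(x,y)$ with $x\in[-n,n]^d$ and $y$ outside contribute, and splitting the resulting sum at a scale $M_n\to\infty$ with $M_n=o(n)$ gives $I(\nu_n)=O(M_n/n)+\sum_{|w|>M_n}a(w)\to 0$ thanks to Assumption~\Atwo.

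The main technical content of the argument is the joint control of the two convergences $\widetilde\mu_n\to\xi$ and $\limsup_n I(\mu_n)\leq\widetilde I(\xi)$. The enabling observation is that the subadditivity of $I$ is available for arbitrary sub-probability measures and not only for disjointly supported ones, so no delicate cross-term cancellation in the energy has to be performed; once this is noted, the remaining inputs—Lemma~\ref{lem:convergenceinM1tilde}, the weak LDP of Subsection~\ref{subsec:weak}, and translation invariance of $Q_t$—assemble into the proof in a straightforward way.
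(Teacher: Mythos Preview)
Your proposal is correct and follows essentially the same route as the paper's proof (together with its Lemmas~\ref{lem:PropertiesI} and~\ref{lem:approx}): approximate $\xi$ by the Step~1 construction of Theorem~\ref{thm:completion}, bound the rate via subadditivity, translation invariance and $I(\nu_n)\to 0$, pull the open set $U$ back through the continuous quotient map, and invoke the weak LDP lower bound. Two cosmetic remarks: your set ``$V_n$'' does not depend on $n$, and your appeal to translation covariance of the law of $L_t$ for the identity $Q_t(U)=\bb P_0(L_t\in V)$ is unnecessary, since this is just the definition of $Q_t$ as the law of $\widetilde L_t$; neither affects correctness.
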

For the proof we will need some properties of the rate function that are provided in the next lemma.
\begin{lemma}\label{lem:PropertiesI}
	The rate function $I$ defined in \eqref{eq:Iclassical} on the space $\mc M_{\leq 1}(\Z^d)$ is translation invariant, i.e., for any $c\in \Z^d$ one has that $I(\mu)= I(\mu*\delta_c)$, and it is homogeneous of degree 1, i.e., for any $\lambda\geq 0$ one has that $I(\lambda\mu)=\lambda I(\mu)$, hence it is convex and consequently sub-additive.
\end{lemma}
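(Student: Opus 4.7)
The plan is to verify the four claims separately, relying on the explicit Dirichlet form in the last equality of \eqref{eq:Iclassical} for translation invariance and homogeneity, and on the variational representation in the first equality of \eqref{eq:Iclassical} for convexity.

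For translation invariance, I would start from $I(\mu) = \sum_{x,y\in\Z^d} a_{x,y}(\sqrt{\mu}(x)-\sqrt{\mu}(y))^2$ and plug in $\mu\ast\delta_c$, whose density is $x\mapsto \mu(x-c)$. After the change of variables $x\mapsto x+c$, $y\mapsto y+c$ the double sum produces the rate $a_{x+c,y+c}$, which by Assumption~(A4) equals $a_{x,y}$. Hence $I(\mu\ast\delta_c)=I(\mu)$, so $I$ depends on $\mu$ only through its orbit $\wtilde\mu$, which is precisely the consistency needed to make the definition \eqref{eq:Inew} of $\wtilde I$ unambiguous.

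Homogeneity is immediate: the identity $\sqrt{\lambda\mu}=\sqrt{\lambda}\sqrt{\mu}$ for $\lambda\geq 0$ pulls out a factor $\lambda$ from every square in the sum. For convexity, my plan is to use the first identity in \eqref{eq:Iclassical}, namely
\[
I(\mu) \;=\; \sup_{\substack{u>0\\ u\in\mc D}} \int \Big(-\frac{\mc L u}{u}\Big)(x)\,\mu(\dd x),
\]
which displays $I$ as a pointwise supremum of functionals that are affine (in fact linear) in $\mu$; any such supremum is automatically convex. Sub-additivity then follows from the combination of convexity and homogeneity of degree $1$: for $\mu_1,\mu_2\in \mc M_{\leq 1}(\Z^d)$ with $\mu_1+\mu_2\in\mc M_{\leq 1}(\Z^d)$,
\[
I(\mu_1+\mu_2) \;=\; 2\,I\Big(\tfrac12\mu_1+\tfrac12\mu_2\Big) \;\leq\; 2\cdot\Big(\tfrac12 I(\mu_1)+\tfrac12 I(\mu_2)\Big) \;=\; I(\mu_1)+I(\mu_2).
\]
For measures whose total mass exceeds $1$ the same inequality still makes sense because the Dirichlet form expression is well-defined on all nonnegative summable functions, so no restriction is needed.

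I do not expect a genuine obstacle here; the only minor subtlety is that the variational formula in \eqref{eq:Iclassical} is primarily stated for probability measures, while the lemma operates on $\mc M_{\leq 1}(\Z^d)$. This is already handled by the remark following \eqref{eq:Iclassical} that the definition extends verbatim to sub-probability measures, so the convexity argument above applies on the full domain. The sub-additivity conclusion is then exactly what is needed in the next subsections, since it is what allows the additive decomposition \eqref{eq:Inew} of $\wtilde I$ to be controlled from above via separate contributions from each orbit.
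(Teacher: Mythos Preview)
Your proposal is correct and matches the paper's approach: translation invariance via the Dirichlet form and Assumption~(A4), homogeneity from the explicit quadratic form, and sub-additivity from convexity plus homogeneity. The only place you are more explicit than the paper is in deriving convexity from the variational representation as a supremum of linear functionals; the paper simply asserts that homogeneity and convexity are ``direct consequences of the definition of $I$'' without specifying which representation is meant, so your added detail is a clarification rather than a different route.
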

\begin{lemma}
	\label{lem:approx}
	Let $\xi \in \widetilde{\mc X}$ with $\widetilde{I}(\xi)< \infty$. Then, there exists a sequence $(\xi_n)$ in $\widetilde{\mc M}_1(\Z^d)$ such that
	\begin{equation}\label{LBC}
	\limsup_{n\to \infty}\wtilde{I}(\xi_n)\leq \wtilde{I}(\xi).
	\end{equation}
\end{lemma}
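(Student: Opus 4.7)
The plan is to adapt the density construction from Step~1 of Section~\ref{S: 2.1}: translate representatives of the orbits $\wtilde\alpha_i$ to pairwise far-apart positions and fill in the remaining mass using the diffuse uniform measure $\nu_n$ from~\eqref{eq:nun}, producing an honest probability measure $\mu_n$ whose orbit converges to $\xi$ in $\wtilde{\mc X}$ for free. The real work will be to show that this construction does not inflate the Dirichlet-type energy $I$ asymptotically. The cornerstone is the simple algebraic identity that, for any two sub-probability measures $\rho,\sigma$ on $\Z^d$ with disjoint supports, $\sqrt{\rho+\sigma}=\sqrt\rho+\sqrt\sigma$ pointwise, and expanding the square using~(A6) yields
\[
I(\rho+\sigma)=I(\rho)+I(\sigma)-4\sum_{x\in\Supp\rho,\,y\in\Supp\sigma}a_{x,y}\sqrt{\rho(x)\sigma(y)}\leq I(\rho)+I(\sigma).
\]

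First I would perform two reductions. Since $\sum_i I(\alpha_i)<\infty$, the truncation $\xi^{(k)}:=\{\wtilde\alpha_i\}_{i\leq k}$ satisfies $\wtilde I(\xi^{(k)})\nearrow\wtilde I(\xi)$, and the bound $|\Lambda(f_r,\alpha_i)|\leq\|f_r\|_\infty\,\alpha_i(\Z^d)$ plugged into the definition of $\mathbf D$ gives $\xi^{(k)}\to\xi$ in $\wtilde{\mc X}$ as $k\to\infty$. Next, I would replace each $\alpha_i$ by its restriction $\alpha_i^{(N)}:=\alpha_i\cdot\one_{B(0,N)}$, which converges to $\alpha_i$ in total variation and, by dominated convergence applied to $\Lambda(f_r,\cdot)$, produces a convergence $\wtilde{\alpha}_i^{(N)}\to\wtilde\alpha_i$ in $\wtilde{\mc X}$. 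A direct split of the defining sum of $I$ yields
\[
I(\alpha_i^{(N)})\leq I(\alpha_i)+2\sum_{x\in B(0,N)}\alpha_i(x)\sum_{y\notin B(0,N)}a_{x,y},
\]
whose error term vanishes as $N\to\infty$: for each fixed $x$ the inner sum tends to zero, and it is dominated by the uniform bound in~(A2), so dominated convergence against $\alpha_i\in\ell^1(\Z^d)$ applies. With the compactly supported $\alpha_1^{(N)},\ldots,\alpha_k^{(N)}$ in hand, I would then choose shifts $(a_i(n))_{i\leq k}\subset\Z^d$ so that the translated supports $\Supp(\alpha_i^{(N)}*\delta_{a_i(n)})$ lie at pairwise distance at least $R_n\to\infty$ from each other and from $[-n,n]^d$, set $q:=1-\sum_{i\leq k}\alpha_i^{(N)}(\Z^d)$, and define the probability measure
\[
\mu_n:=\sum_{i\leq k}\alpha_i^{(N)}*\delta_{a_i(n)}+q\,\nu_n.
\]
By Step~1 of Section~\ref{S: 2.1}, $\wtilde\mu_n\to\xi^{(k,N)}:=\{\wtilde\alpha_i^{(N)}\}_{i\leq k}$ in $\wtilde{\mc X}$ as $n\to\infty$. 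Iterating the sub-additivity inequality above across the $k+1$ pairwise-disjoint summands, and using the translation invariance and degree-one homogeneity of $I$ from Lemma~\ref{lem:PropertiesI}, yields
\[
I(\mu_n)\leq\sum_{i\leq k}I(\alpha_i^{(N)})+q\,I(\nu_n).
\]

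The main obstacle will be showing $I(\nu_n)\to 0$, and this is where (A4) enters decisively. By translation invariance of the rates combined with~(A2), the sum $\sum_y a_{0,y}=\sum_y a_{x,y}$ is independent of $x$ and finite, so the tail $\epsilon_R:=\sum_{|z|\geq R}a_{0,z}$ vanishes as $R\to\infty$. Writing
\[
I(\nu_n)=\frac{2}{(2n+1)^d}\sum_{x\in[-n,n]^d\cap\Z^d}\sum_{y\notin[-n,n]^d}a_{x,y}
\]
and splitting the outer sum according to whether $x$ lies within distance $\sqrt n$ of the boundary of the box will produce a boundary-layer contribution of relative volume $O(1/\sqrt n)$ and a bulk contribution bounded by $\epsilon_{\sqrt n}$; both tend to $0$. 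Sending $n\to\infty$, then $N\to\infty$, then $k\to\infty$, and extracting a diagonal sequence $\xi_n:=\wtilde\mu_{n_k}^{(k,N_k)}$ then produces the desired elements of $\wtilde{\mc M}_1(\Z^d)$, converging to $\xi$ in $\wtilde{\mc X}$ (as is needed for the lower bound) and satisfying $\limsup_n\wtilde I(\xi_n)\leq\wtilde I(\xi)$.
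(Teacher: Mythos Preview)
Your argument is correct and follows the same blueprint as the paper's proof---build $\mu_n$ from far-apart translates of the $\alpha_i$'s plus the diffuse filler $\nu_n$, then control $I(\mu_n)$ by sub-additivity and the fact that $I(\nu_n)\to 0$---but you take a more hands-on route than necessary. The paper simply invokes the abstract sub-additivity of $I$ established in Lemma~\ref{lem:PropertiesI} (homogeneity of degree one plus convexity), which holds for \emph{arbitrary} sub-probability measures, not only those with disjoint supports. This lets the paper write directly
\[
I(\mu_n)\le\sum_{i\in J_n}I(\alpha_i)+\Big(1-\sum_{i\in J_n}p_i\Big)I(\nu_n)
\]
without ever truncating the $\alpha_i$'s to balls; the compact-support step, the error estimate for $I(\alpha_i^{(N)})$, the extra parameter $N$, and the final diagonal extraction are all dispensed with. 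Your disjoint-support identity $I(\rho+\sigma)=I(\rho)+I(\sigma)-4\sum a_{x,y}\sqrt{\rho(x)\sigma(y)}$ is a valid alternative that gives slightly sharper information (an explicit cross term), but it buys nothing here since only the inequality is needed. The paper also leaves $I(\nu_n)\to 0$ as a ``straightforward computation'', whereas you supply the boundary-layer/tail argument in full; that part is a genuine addition of detail rather than a different idea.
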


The proofs of the above lemmas are deferred to the end of that section. We  now provide the proof of Proposition~\ref{lem:lowerbound}.
\begin{proof}[Proof of Proposition~\ref{lem:lowerbound}]
	Note that to show \eqref{LB} it is enough to prove that, given $\xi \in \widetilde{\mc X}$ with $\widetilde{I}(\xi)<\infty$,
	\begin{align}\label{LB2}
	\liminf_{t\to\infty}\dfrac{1}{t}\log Q_t(U)\geq -\widetilde{I}(\xi)
	\end{align}
	for any \textcolor{blue}{open} neighbourhood $U$ of $\xi$.
	First of all note that for any open neighbourhood $U$ of $\xi$ the set $U \cap \widetilde{\mathcal{M}}_1$ is open in $\widetilde{\mc M}_1$. Indeed, given any $\theta\in U\cap \widetilde{\mathcal{M}}_1$ and any sequence $(\xi_n)_{n\in\bb N}$ in $\widetilde{\mc M}_1$ that converges to $\theta$ with respect to the topology in $\widetilde{\mc M}_1$ it follows from Lemma~\ref{lem:convergenceinM1tilde} that $\xi_n$ converges also with respect to the topology of $\widetilde{\mc X}$ to $\theta$. Thus, since $U$ is open in $\wtilde{\mc X}$ eventually $\xi_n\in U$, and since it is a sequence in $\widetilde{\mc M}_1$ we have therefore that $\xi_n\in U\cap\widetilde{\mc M}_1$ provided that $n$ is sufficiently large. Thus, $U\cap\widetilde{\mc M}_1$ is open in $\widetilde{\mc M}_1$ as claimed. Since $\widetilde{L}_t\in\widetilde{\mathcal{M}}_1$ we have that
	$$
	\mathds{P}(\widetilde{L}_t\in U)=\mathds{P}(\widetilde{L}_t\in U\cap \widetilde{\mathcal{M}}_1).
	$$
	Define
	\begin{align*}
	q:\;&\mathcal{M}_1\to\widetilde{\mathcal{M}}_1\\\
	&u\mapsto q(u)=\wtilde{u}\,,
	\end{align*}
	as the canonical map from $\mc M_1$ to $\wtilde{\mc M}_1$.
	Note that $q$ is a continuous mapping, so that $q^{-1}(U \cap\widetilde{\mc M}_1)$ is an open set.
	Now consider a sequence $(\xi_n)_{n\in \bb N}$ as in the claim above. Then for all $n$ sufficiently large we have that $\xi_n\in U\cap \widetilde{\mc M}_1$. For those $n$ we can estimate
	\begin{align*}
	\liminf_{t\to\infty}\frac{1}{t}\log\mathds{P}(\widetilde{L}_t\in U)&=
	\liminf_{t\to\infty}\frac{1}{t}\log\mathds{P}(L_t\in q^{-1}(U\cap\widetilde{\mathcal{M}}_1))\\
	&\geq -I(\xi_n)\\
	&=-\widetilde{I}(\xi_{n}).
	\end{align*}
	Here, we used in the inequality the fact that $(L_t)_{t\geq 0}$ satisfies the large deviation lower bound in the usual weak topology of probability measures together with the fact that by Lemma~\ref{lem:PropertiesI} the rate function $I$ only depends on the orbit of the measure, and the last equality uses that $\xi_n$ as an element of $\wtilde{\mc X}$ contains only one element. To conclude it suffices to send $n$ to infinity.
\end{proof}

We now come to the proof of Lemma~\ref{lem:PropertiesI}.
\begin{proof}[Proof of Lemma~\ref{lem:PropertiesI}]
	We first show that $I$ is translation invariant.
	We write, using Assumption~$\rm{(A4)}$ in the last equality
	\begin{equation*}
	\begin{aligned}
	\sum_{x,y\in\Z^d}a_{x,y}(\sqrt{\mu}(x+c)-\sqrt{\mu}(y+c))^2
	&= \sum_{x,y\in\Z^d}a_{x-c,y-c}(\sqrt{\mu}(x)-\sqrt{\mu}(y))^2\\
	&=\sum_{x,y\in\Z^d}a_{x,y}(\sqrt{\mu}(x)-\sqrt{\mu}(y))^2\,.
	\end{aligned}
	\end{equation*}
	%	Note that for any positive function $u\in \mc D$ and any $c\in \mb R^d$
	%	\begin{equation}
	%		\begin{aligned}
	%			\int \Big(\frac{\mc L u}{u}\Big)(x) (\mu*\delta_c)(\dd x)
	%			&= \int \frac{\sum_y a_{x-c, y}[u(y) - u(x-c)]}{u(x-c)}\mu(\dd x)\\
	%			&= \int \frac{\sum_y a_{x, y+c}[\hat u(y+c) - \hat u(x)]}{\hat u(x)}\mu(\dd x)\,.
	%		\end{aligned}
	%	\end{equation}
	%	Here, $\hat u(x)= u(x+c)$ and we used Assumption \rm{(A4)} in the last equality. The claim now follows by taking the infimum over $\{u>0, u\in \mc D\}$.
	Homogeneity, and convexity are direct consequences of the definition of $I$. The sub-additivity is then a immediate consequence of the homogeneity and the convexity. Hence, we can conclude.
\end{proof}

We now come to the proof of Lemma~\ref{lem:approx}.
\begin{proof}[Proof of Lemma~\ref{lem:approx}]
	By our analysis at the beginning of Section~\ref{S: 2.1} the sequence $\wtilde{\mu}_n$ approximating $\xi$ in $\wtilde{\mc X}$ can be chosen to be of the form
	$$
	\mu_{n}=\sum_{i \in J_n}
	\alpha_i*\delta_{a_i}+\bigg(1-\sum_{i\in J_n}p_i \bigg)\nu_n\,,
	$$
	where $\nu_n$ is defined in~\eqref{eq:nun}.
	Here, given $\xi=\{\alpha_i:\, i\in I\}$ as at the beginning of the proof we denote by $(J_n)$ a suitable chosen sequence of subsets of $I$ that tend to $I$ in the limit as $n$ tends to infinity (note that in Section~\ref{S: 2.1} the first step we detailed was for the case in which $I$ was finite).
	By Lemma~\ref{lem:PropertiesI}  the rate function $I$ is sub-additive on $\mathcal{M}_{\leq 1}(\Z^d)$, thus
	\begin{align*}
	I(\mu_{n})&=I\Bigg(\sum_{i\in J_n}(\alpha_i*\delta_{a_i})) +\bigg(1-\sum_{i\in J_n}p_i\bigg)\nu_n\Bigg)  \\
	&\leq\sum_{i\in J_n}I(\alpha_i*\delta_{a_i})+\bigg(1-\sum_{i\in J_n}p_i\bigg)I(\nu_n)\\
	&=\sum_{i\in J_n}I(\alpha_i)+\bigg(1-\sum_{i\in J_n}p_i\bigg)I(\nu_n)\,.
	\end{align*}
	A straightforward computation shows that $\lim_{n\to\infty}I(\nu_n)=0$.
	Since, by the definition of $\wtilde{I}$ in \eqref{eq:Inew}
	\begin{equation*}
	\sum_{i\in J_n}I(\alpha_i)\leq \wtilde{I}(\xi)\,,
	\end{equation*}
	we can conclude.
\end{proof}

\subsection{Upper bound}
\begin{proposition}[Upper bound]\label{prop:UB} For any closed set $F$ in $\widetilde{\mc X}$, we have that
	\begin{align}\label{eq:Upp}
	\limsup_{t\to \infty}\dfrac{1}{t}\log Q_t(F) \leq -\inf_{\xi\in F}\widetilde{I}(\xi).
	\end{align}
\end{proposition}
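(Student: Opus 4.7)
The plan is to exploit the compactness of $(\widetilde{\mathcal X},\mathbf D)$ afforded by Theorem~\ref{thm:completion}, which makes every closed $F\subset\widetilde{\mathcal X}$ compact. It therefore suffices to establish a \emph{local} upper bound: for each $\xi\in\widetilde{\mathcal X}$ and each $\varepsilon>0$ there is a neighbourhood $U_\xi$ of $\xi$ with
\begin{equation*}
\limsup_{t\to\infty}\frac1t\log Q_t(U_\xi)\leq -\min\big(\widetilde I(\xi),\,\varepsilon^{-1}\big)+\varepsilon.
\end{equation*}
Extracting a finite subcover of $F$ and applying a union bound yields~\eqref{eq:Upp}; the case $\widetilde I(\xi)=\infty$ is then handled by letting $\varepsilon^{-1}$ play the role of a prescribed level $M\to\infty$. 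In what follows I describe the local estimate when $\widetilde I(\xi)=c<\infty$.

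Fix $\xi=\{\widetilde\alpha_i\}_{i\in I}$ and $\varepsilon>0$, and pick $k$ so large that $\sum_{i=1}^k I(\alpha_i)\geq c-\varepsilon$. Guided by the first step of the proof of Theorem~\ref{thm:completion} and by Corollary~\ref{cor:decomposition}, I would build a basic neighbourhood $U$ of $\xi$, defined by testing against finitely many $f_r\in\mathcal F$, with the following geometric consequence: there exist $R\geq 1$ and $\delta>0$ (depending on $U$) such that whenever $\widetilde L_t\in U$ one can find $y_1,\dots,y_k\in\Z^d$ with $\min_{i\neq j}|y_i-y_j|\geq 10R$ and such that, for each $i=1,\dots,k$, the shifted restriction $(L_t|_{B(y_i,R)})*\delta_{-y_i}$ lies within distance $\delta$ of $\alpha_i$ in the weak topology on $\mathcal M_{\leq 1}(\Z^d)$. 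The verification of this geometric consequence is essentially a rerun of the second step of the proof of Theorem~\ref{thm:completion}, since if it failed one could extract a subsequential limit $\widetilde L_{t_n}\to\xi'\neq\xi$.

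Next, I would use Assumption~$\rm{(A5)}$ to localise: up to an error $e^{-Kt}$ (for any prescribed $K$), we may assume $\sup_{0\leq s\leq t}|X_s|\leq u_t$, and hence may restrict $y_1,\dots,y_k\in[-u_t,u_t]^d$. A union bound gives
\begin{equation*}
Q_t(U)\;\leq\; e^{-Kt}\;+\!\!\!\!\sum_{\substack{y_1,\dots,y_k\in[-u_t,u_t]^d\\ \min_{i\neq j}|y_i-y_j|\geq 10R}}\!\!\!\!\mathds P\big(L_t\in A_{y_1,\dots,y_k}\big),
\end{equation*}
where $A_{y_1,\dots,y_k}\subset\mathcal M_1(\Z^d)$ encodes the aforementioned shifted-restriction condition. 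The number of summands is at most $(2u_t+1)^{dk}=e^{o(t)}$ by~$\rm{(A5)}$, so it is subexponential and contributes nothing to the rate. Each $A_{y_1,\dots,y_k}$ is contained in a set of measures supported on a fixed finite union of balls, hence can be chosen compact in the weak topology, so the weak LDP from Section~\ref{subsec:weak} applies and gives
\begin{equation*}
\limsup_{t\to\infty}\frac1t\log\mathds P(L_t\in A_{y_1,\dots,y_k})\;\leq\;-\inf_{\mu\in A_{y_1,\dots,y_k}}I(\mu).
\end{equation*}

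The crux, and what I expect to be the main obstacle, is the final step: showing that the infimum above is at least $c-O(\varepsilon)$. For $\mu\in A_{y_1,\dots,y_k}$, set $\mu_i:=\mu|_{B(y_i,R)}$; these have pairwise disjoint supports, and by Lemma~\ref{lem:PropertiesI} translating each $\mu_i$ by $-y_i$ does not change $I(\mu_i)$. By lower semicontinuity of $I$ on $\mathcal M_{\leq 1}(\Z^d)$ one obtains $I(\mu_i*\delta_{-y_i})\geq I(\alpha_i)-o_\delta(1)$ on the set $A_{y_1,\dots,y_k}$, provided $R$ is large enough and $\delta$ small enough. It then remains to compare $I(\mu)$ with $\sum_{i=1}^k I(\mu_i)$; writing out the Dirichlet form of~\eqref{eq:Iclassical} with Assumption~$\rm{(A6)}$ and noting the disjointness of the supports, one has $I(\mu)=I\big(\sum_i\mu_i+\mu'\big)$ with $\mu'$ supported away from $\bigcup_i B(y_i,R)$, and the cross-terms between different $\mu_i$ take the form $-2\sum_{x\in B(y_i,R),\,z\in B(y_j,R)} a_{x,z}\sqrt{\mu_i(x)}\sqrt{\mu_j(z)}$. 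These cross-terms are the obstacle: they must be shown to be small once $R$ and the separations $|y_i-y_j|$ are large, and this is precisely where a tail decay of the rates (quantitatively inherited from Assumption~$\rm{(A5)}$, e.g.\ via an exponential moment estimate on $a_{0,\cdot}$) enters. Once this lower bound $I(\mu)\geq\sum_{i=1}^k I(\alpha_i)-O(\varepsilon)\geq c-O(\varepsilon)$ is in place, combining the display above with the subexponential counting factor and the $e^{-Kt}$ correction, and letting $K\to\infty$ and $\varepsilon\to 0$, yields the local upper bound at $\xi$ and completes the proof.
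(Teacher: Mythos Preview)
Your overall architecture (compactness of $\widetilde{\mc X}$ $\Rightarrow$ local bound $\Rightarrow$ finite cover) is right, and the use of Assumption~(A5) to reduce to polynomially many spatial configurations is exactly how the paper handles the supremum over centres. However, the proof has a genuine gap at the step where you invoke the weak LDP.

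You assert that ``each $A_{y_1,\dots,y_k}$ is contained in a set of measures supported on a fixed finite union of balls, hence can be chosen compact in the weak topology.'' This is false: the event $A_{y_1,\dots,y_k}$ only constrains the restrictions $L_t|_{B(y_i,R)}$; the remaining mass of $L_t$ (the ``$\beta_n$'' piece in Corollary~\ref{cor:decomposition}) is completely free and can sit anywhere in $\Z^d$. Thus $A_{y_1,\dots,y_k}$ is closed but \emph{not} compact in $\mc M_1(\Z^d)$, and the Donsker--Varadhan weak upper bound does not apply to it. Localising the walk to $[-u_t,u_t]^d$ via (A5) does not help, because that box grows with $t$ and the weak LDP requires a $t$-independent compact set. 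If you try to repair this by passing to the projection $L_t\mapsto L_t|_{\cup_i B(y_i,R)}$ (which does live in a compact set), you then need an LDP upper bound for this projected empirical measure, and the only way to get one is via an exponential Chebyshev bound with test functions supported on the balls together with a Feynman--Kac estimate --- which is precisely the route the paper takes.

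A second, smaller issue: in your Dirichlet-form comparison you single out the cross-terms between different $\mu_i$'s as ``the obstacle.'' Those terms are in fact harmless once the balls are well separated and the rates decay. The terms that actually spoil the na\"ive inequality $I(\mu)\geq\sum_i I(\mu_i)$ are the $\mu_i$--$\mu'$ boundary interactions across $\partial B(y_i,R)$: if $\mu'$ carries mass just outside $B(y_i,R)$, the Dirichlet form of $\mu$ across that boundary can be much smaller than the corresponding boundary term in $I(\mu_i)$, and this deficit need not be small relative to $I(\alpha_i)$. The paper avoids this entirely by never comparing Dirichlet forms of restrictions; instead it works with the variational representation $I(\alpha)=\sup_u\int\frac{-\mc L u}{c+u}\,d\alpha$, builds a single test function $g=c+\sum_i u_i(\cdot+a_i)\varphi(\frac{\cdot+a_i}{R})$ out of well-separated bumps, controls $\mathbb E[\exp(t\int\frac{-\mc L g}{g}\,dL_t)]$ uniformly by Feynman--Kac (Lemma~\ref{lem:UBfundamental}), and shows that on any sequence $\widetilde\mu_n\to\xi$ the resulting functional $\widetilde F$ lower-bounds $\sum_i\int\frac{-\mc L u_{i,R}}{c+u_{i,R}}\,d\alpha_i$ (Lemma~\ref{lem:Ftildelower}), whose supremum over $u_i,c,R,k$ is exactly $\widetilde I(\xi)$ (Lemma~\ref{lem:ratefct}).
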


Before diving into the proof we present some definitions and observations that will be useful for our purposes.
Let $\mathcal{U}$ be the space of functions of the form $u=c+v$ where $v$ is a smooth non-negative function with compact support on $\R^d$ and $c>0$ is a constant. Let $\varphi$ be a function satisfying $0\leq\varphi(x)\leq1, \ \varphi(x)=1$ inside the unit ball and $\varphi(x)=0$ outside the ball of radius 2. For any $k\geq 1, R>0, u_1, ... ,u_k\in \mathcal{U} $ and $a_1,  ..., a_k\in \Z^d$ and $c>0$ we consider the function
\begin{align}\label{eq:g}
g(x)=g(k, R, c, a_1, ..., a_k, x)=c+\sum_{i=1}^{k}	u_i(x+a_i)\varphi\bigg(\dfrac{x+a_i}{R}\bigg)
\end{align}
and define $F:\Omega\to\R$ by setting
\begin{align}\label{eq:F}
\nonumber	F(u_1, \ldots, u_k, c, R, t, \omega)&=
\sup_{\substack{a_1, \ldots, a_k\,:\\ \inf_{i\neq j}|a_i-a_j|\geq 4R}} \
\dfrac{1}{t}\int_{0}^{t}\dfrac{\mathcal{-L} (g(X(s)))}{g(X(s))}\dd s\\
\
&=		\sup_{\substack{a_1, \ldots, a_k\,:\\ \inf_{i\neq j}|a_i-a_j|\geq 4R}} \
\int_{\R^d}\dfrac{\mathcal{-L} ( g(x))}{g(x)}L_t(\dd x).
\end{align}
Since the last expression depends only on the image $\widetilde{L}_t$ of $L_t$ in $\widetilde{\mc X}$, we can write
\begin{align*}
\widetilde{F}(u_1, u_2, \ldots, u_k, c, R, \widetilde{L}_t)
:= F(u_1, u_2,\ldots, u_k, c, R,t,\omega).
\end{align*}
We will prove first that $\widetilde{F}(\cdot)$ grows only sub-exponentially as $t\to \infty$.

\begin{lemma}\label{lem:UBfundamental}
	For any $k\geq1, R>0, \ u_1, \ldots, u_k\in \mathcal{U}$ and $c>0$,
	\begin{equation}\label{FSexp}
	\limsup_{t\to \infty}\dfrac{1}{t}\log \mathds{E}\{\exp\{t\widetilde{F}(u_1,\ldots, u_k, c, R, \widetilde{L}_t)\}\}\leq 0.
	\end{equation}
\end{lemma}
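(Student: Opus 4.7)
The plan is to split $\mathds{E}[\exp(t\widetilde F)]$ according to whether or not the event $A_t:=\{\sup_{0\le s\le t}|X_s|\le u_t\}$ (with $u_t$ as in $\rm{(A5)}$) occurs. On $A_t^c$, the uniform a priori bound $|\widetilde F|\le K$ (derived from $c\le g\le M_0:=c+k\max_i\|u_i\|_\infty$ together with $\rm{(A2)}$) yields $\mathds{E}[e^{t\widetilde F}\mathds{1}_{A_t^c}]\le e^{tK}\mathds{P}(A_t^c)$, which decays faster than any exponential by $\rm{(A5)}$.

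For the analysis on $A_t$, I would use as a basic building block the Feynman--Kac identity for a fixed tuple $a=(a_1,\dots,a_k)$: since $g_a$ is positive and bounded and $\mc L g_a$ is bounded by $\rm{(A2)}$, Dynkin's formula gives that $M_t:=g_a(X_t)\exp(-\int_0^t(\mc L g_a/g_a)(X_s)\,ds)$ is a bounded martingale with $\mathds{E}[M_t]=g_a(X_0)\le M_0$, and hence
\[
\mathds{E}\Big[\exp\Big(\int_0^t\tfrac{-\mc L g_a(X_s)}{g_a(X_s)}\,ds\Big)\Big]=\mathds{E}\Big[\tfrac{M_t}{g_a(X_t)}\Big]\le \tfrac{M_0}{c}.
\]
This bound is uniform in $a$, but by itself it is not enough, since $\widetilde F$ takes a supremum over the infinite set $\Lambda:=\{a\in(\Z^d)^k:\inf_{i\ne j}|a_i-a_j|\ge 4R\}$.

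The key step I would use to reduce $\Lambda$ to a finite, polynomially large set is the following geometric observation. Each bump $\psi_i^a(x):=u_i(x+a_i)\varphi((x+a_i)/R)$ is nonnegative and supported in $B(-a_i,2R)$, so if $|a_i|>u_t+2R$ then $\psi_i^a$ vanishes along the trajectory on $A_t$, while $(-\mc L\psi_i^a)(X_s)=-\sum_y a_{X_s,y}\psi_i^a(y)\le 0$. Splitting $g_a=g_a^{\rm near}+g_a^{\rm far}$ according to whether $|a_i|\le u_t+2R$ or not, this shows that $g_a(X_s)=g_a^{\rm near}(X_s)$ and $(-\mc L g_a)(X_s)\le (-\mc L g_a^{\rm near})(X_s)$ on $A_t$. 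Passing $\sup$ inside the exponential and using $\sup\le\sum$ for nonnegative integrands then gives, on $A_t$,
\[
e^{t\widetilde F}\le \sum_{S\subset\{1,\dots,k\}}\sum_{b\in\Lambda_{t,S}}\exp\Big(\int_0^t\tfrac{-\mc L g_{S,b}(X_s)}{g_{S,b}(X_s)}\,ds\Big),
\]
where $\Lambda_{t,S}$ is the set of $4R$-separated tuples $(b_i)_{i\in S}$ with $|b_i|\le u_t+2R$, giving at most $2^k(2u_t+4R+1)^{dk}$ terms.

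Combining this union bound with the Feynman--Kac estimate for each $(S,b)$ together with the $A_t^c$ bound yields
\[
\tfrac1t\log \mathds{E}\bigl[e^{t\widetilde F}\bigr]\le \tfrac1t\bigl[dk\log(2u_t+4R+1)+O(1)\bigr]+o(1),
\]
and the right-hand side tends to $0$ because $t^{-1}\log u_t\to 0$ by $\rm{(A5)}$. The main obstacle the argument must overcome is precisely the infinite range of the sup in the definition of $\widetilde F$; the combination of the confinement in $\rm{(A5)}$ with the nonnegativity of $u_i\varphi$ (so that far-away bumps can only decrease the integrand) is what turns that sup into one over polynomially many configurations.
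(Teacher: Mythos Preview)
Your proof is correct and follows the same two-step strategy as the paper: a Feynman--Kac bound for fixed $a$ (the paper phrases it via PDE uniqueness, you via the equivalent martingale formulation), followed by the coarse-graining reduction of the supremum to polynomially many configurations using $\rm{(A5)}$. Your handling of the reduction step is in fact slightly more careful than the paper's: you use the nonnegativity of the bumps $\psi_i^a$ to argue that far-away $a_i$'s can only \emph{decrease} the integrand $-\mc L g/g$ (since $(-\mc L\psi_i^a)(X_s)\le 0$ when $\psi_i^a(X_s)=0$), whereas the paper's remark that such $a_i$'s ``no longer affect the value of $g$'' literally addresses only the denominator $g(X_s)$ and glosses over the possible long-range contribution to $\mc L g(X_s)$ in the numerator.
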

\begin{proof}
	The proof proceeds in two steps. In the first step we show that the result follows if there were no supremum over $a_1, a_2,\ldots, a_k$ in the definition of $\wtilde F$. To be more precise we will actually show that
	\begin{align*}
	\mathds{E}\biggl\{\exp \bigg\{\int_{0}^{t}\dfrac{(\mathcal{-L}g)(X_s)}{g(X_s)}\dd s\bigg\}\biggr\}
	\end{align*}
	is bounded from above uniformly in the choice of $a_1, \ldots, a_k\in \Z^d$.
	By the Feynman-Kac formula, the function
	\begin{align*}
	\Psi(t,x)=\mathds{E}_x\bigg\{g(X_t)\exp\bigg\{\int_{0}^{t}\dfrac{(\mathcal{-L} g)(X_s)}{g(X_s)}\dd s\bigg\}\bigg\}
	\end{align*}
	is a solution to the initial value problem
	\begin{align*}
	\left\{
	\begin{array}{l}
	\dfrac{\partial}{\partial t}\Psi(t,x)=(\mathcal{L}\Psi)(t,x) -\dfrac{(\mathcal{L} g)(x)}{g(x)}\Psi(t,x)\,,\\
	\Psi(0,x)=g(x).
	\end{array}
	\right.
	\end{align*}
	Moreover, it is known that the above problem has a unique solution.
	It is immediate that $\Psi(t,x)=g(x)$ solves the above heat equation. Furthermore, by definition \eqref{eq:g} we have that $g(x)\geq c.$
	Therefore, by the uniqueness of the solution
	\begin{equation*}
	g(x)=\mathds{E}_x\bigg\{g(X_t)\exp\bigg\{\int_{0}^{t}\dfrac{(\mathcal{-L}  g)(X_s)}{g(X_s)}\dd s\bigg\}\bigg\}
	\geq c\mathds{E}_x\bigg\{ \exp\bigg\{ \int_{0}^{t}\dfrac{(\mathcal{-L}  g)(X_s)}{g(X_s)}\dd s\bigg\}\bigg\}\,,
	\end{equation*}
	and consequently
	\begin{equation}\label{42}
	\mathds{E}_x\bigg\{ \exp \bigg\{\int_{0}^{t}\dfrac{(\mathcal{-L} g)(X_s)}{g(X_s)}\dd s\bigg\}\bigg\}\leq \dfrac{g(x)}{c}.
	\end{equation}
	Since $g$ is uniformly bounded from above in $a_1, \ldots, a_k\in \Z^d$ the claim follows.
	We come to the second step of the proof, namely we will deal with the supremum over $(a_1,...,a_k)$ inside the expectation. To that end we will use a coarse graining argument.
	First note that if the range of the random walk in the time interval $[0,t]$ is $r_t$, once any $|a_i| $ exceeds $r_t+2R$ it will no longer affect the value of $g$, since in this case we have that
$
	|\tfrac{X_s+a_i}{R}|\geq 2
$
	and $\varphi$  is supported in the ball of radius 2.
	Thus, we can limit each $a_i$ to the ball of radius $r_t+2R.$
	%If we consider $X_t=S_{N_t}$, where $N_t$  represents the number of jumps until time $t$ and $N_t\sim\text{Poison} (2dt)$, we have that
	%\begin{align*}	\{\max_{0<s\leq t}|X_s|\geq t^2\}\subset \{N_t\geq t^2	\}
	%\end{align*} and therefore
	%\begin{align*}
	%	\mathds{P}(\max_{0<s\leq t}|X_s|\geq t^2)&\leq\mathds{P}(N_t\geq t^2)\\
	%	&\leq\exp(t-t^2\log(t))
	%\end{align*}
	By Assumption \rm{(A5)} there exists a positive function $(u_t)_{t\geq 0}$ with $\lim_{t\to\infty}\tfrac1t \log u_t=0$ and a function $M(t)$ tending to infinity as $t\to\infty$ such that
	\begin{align*}
	\mathds{P}\big(\sup_{0<s\leq t}|X_s|\geq u_t\big)
	\leq\exp(-tM(t))
	\end{align*}
	for all $t\geq 0$.
	We will see that this will allow us to ignore the contributions of those trajectories satisfying $\sup_{0<s\leq t}|X_s|\geq u_t$. We can assume that $2R\leq u_t$, so that we can restrict the $a_i$'s to balls of radius $2u_t$.
	%	Since the function
	%	\begin{align*}
	%		\dfrac{(\mathcal{-L} g)(x)}{g(x)}
	%	\end{align*}
	%	is a uniformly continuos function of $a_1,...,a_k$, given $\varepsilon>0$ there is $\delta>0$ such that its oscilation in a box of size $\delta$ is at most $\varepsilon$. A ball of radius $u_t$ can be covered by $\bigg(\dfrac{u_t}{\delta}\bigg)^{dk}$
	%	such boxes. We fix for the rest of the proof a set $K\subset (\R^d)^k$ of representatives of such boxes, which is a set of cardinality at most $\bigg(\dfrac{u_t}{\delta}\bigg)^{dk}$ such that $|a_i-a_j|\geq 4R$ for all $i\neq j$.
	Moreover, the function
	\begin{align*}
	\dfrac{(\mathcal{-L} g)(x)}{g(x)}
	\end{align*}
	is bounded from above by some constant $c_2$. This is a consequence Assumption~\rm{(A2)} and the fact that $g$ is bounded from above and is bounded from below by $c$.
	With the above considerations in mind we can now estimate
	\begin{align*}
	&\mathds{E}\Bigg\{\exp\Bigg\{\underset{|a_i-a_j|\geq4R \ \forall i\neq j}{{\sup_{a_1, \ldots, a_k\in\Z^d}}}\int_{0}^{t}\dfrac{(\mathcal{-L}  g)(X_s)}{g(X_s)}\dd s \Bigg\}\Bigg\}\\
	\
	&\leq\mathds{E}\Bigg\{\exp\Bigg\{\underset{|a_i-a_j|\geq4R \ \forall i\neq j}{{\sup_{|a_1|\leq 2u_t, \ldots, |a_k|\leq 2u_t}}}\int_{0}^{t}\dfrac{(\mathcal{-L} g)(X_s)}{g(X_s)}\dd s \Bigg\}\Bigg\}+\exp ({c_2t})\mathds{P}\Big(\sup_{0\leq s\leq t}|X_s| \geq u_t\Big)\\
	&\leq \mathds{E}\Bigg\{\sum_{\substack{(a_1, \ldots, a_k)\in \Z^d\,\\|a_i-a_j|\geq 4R }}\exp\Bigg\{\int_{0}^{t}\dfrac{(\mathcal{-L} g)(X_s)}{g(X_s)}\dd s \Bigg\}\Bigg\} +\exp(c_2 t-tM(t))\\
	\ \ \ \
	&\leq (2u_t)^{dk}\sup_{\substack{(a_1,\ldots, a_k)\in \Z^d\,\\ |a_i-a_j|\geq 4R}}\mathds{E}\Bigg\{\exp\Bigg\{\int_{0}^{t}\dfrac{\mathcal{-L} ( g(X_s))}{g(X_s)}\dd s \Bigg\}\Bigg\} +\exp(c_2 t-tM(t))\,.
	\end{align*}
	Note that by the first step, the expectation above can be bounded by $g(k,R,c,a_1,\ldots, a_k,x)/c$ which actually is bounded uniformly in the choice of $a_1,\ldots, a_k$. Hence, taking the logarithm above, dividing by $t$, sending $t$ to infinity  shows that the left hand side in~\eqref{FSexp} is at most zero.

\end{proof}

\begin{lemma}\label{lem:Ftildelower}
	Let $(\wtilde{\mu}_{n})$ be a sequence in $\widetilde{\mc X}$ which converges to $\xi=\{\wtilde{\alpha}_j\}\in \wtilde{\mc{X}}$. For any $k\in\mathds{N}, i=1, \ldots,k$ and $u_{i,R}(x)=u_i(x)\varphi \bigg(\dfrac{x}{R}\bigg)$, where $u_i\in \mathcal{U}$, we have
	\begin{align} \label{eq:Lambda}
	\nonumber	\liminf_{n\to\infty}\widetilde{F}(u_1, \ldots,u_k, c, R, \wtilde{\mu}_n)&\geq\sup_{\alpha_1, \ldots, \alpha_k\in \xi}\sum_{i=1}^{k}\sup_{b\in\Z^d}\int\dfrac{(\mathcal{-L}u_{i,R})(x)}{c+u_{i,R}(x)}\alpha_i(dx+b) \\
	&=:\wtilde{\Lambda}(\xi, R, c, u_1, \ldots, u_k).
	\end{align}
\end{lemma}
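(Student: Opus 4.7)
The plan is to unpack $\wtilde\mu_n\to\xi$ via Corollary~\ref{cor:decomposition} and, for arbitrary representatives $\alpha_1,\ldots,\alpha_k$ of $k$ orbits in $\xi$ together with shifts $b_1,\ldots,b_k\in\Z^d$, exhibit an admissible tuple in the supremum defining $\wtilde F(\wtilde\mu_n)$ whose bumps of $g$ line up with the concentration regions of $\mu_n$. Concretely, after passing to a subsequence, Corollary~\ref{cor:decomposition} gives $\mu_n=\sum_{i=1}^k\alpha_{n,i}+\beta_n$ with $\min_{i\neq j}|a_{n,i}-a_{n,j}|\to\infty$, $\alpha_{n,i}\ast\delta_{a_{n,i}}\Rightarrow\alpha_i$, and $\Supp(\beta_n)\subset[\cup_i B(-a_{n,i},R_{n,i})]^{\complement}$ with $R_{n,i}\to\infty$. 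I would then set $a_i^{(n)}:=a_{n,i}-b_i$, which for $n$ large satisfies $\min_{i\neq j}|a_i^{(n)}-a_j^{(n)}|\geq 4R$, so that
\[\wtilde F(u_1,\ldots,u_k,c,R,\wtilde\mu_n)\;\geq\;\int\frac{-\mc L g(x)}{g(x)}\,\mu_n(\dd x),\qquad g(x)=c+\sum_{i=1}^k u_{i,R}(x+a_i^{(n)}).\]
Linearity, the fact that $\mc L$ annihilates constants, and translation invariance (A4) give the identity $(\mc L g)(x)=\sum_i(\mc L u_{i,R})(x+a_i^{(n)})$.

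The next step is to split the integral according to $\mu_n=\sum_j\alpha_{n,j}+\beta_n$. For the $j$th piece, the substitution $y=x+a_{n,j}$ turns the measure into $(\alpha_{n,j}\ast\delta_{a_{n,j}})\Rightarrow\alpha_j$. The $i=j$ summand of numerator and denominator reduces to $(\mc L u_{j,R})(y-b_j)$ and $u_{j,R}(y-b_j)$, while for $i\neq j$ the argument $y+(a_{n,i}-a_{n,j})-b_i$ runs to infinity on compacts in $y$; both $u_{i,R}$ (compact support) and $\mc L u_{i,R}$ (whose decay at infinity follows from $q(w):=a_{0,w}\to 0$, itself a consequence of summability in (A2)) therefore vanish uniformly on compacts. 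Together with the uniform bound on $|\mc L g|/g$ already used in the proof of Lemma~\ref{lem:UBfundamental}, weak convergence yields
\[\lim_{n\to\infty}\int\frac{-\mc L g(x)}{g(x)}\,\alpha_{n,j}(\dd x)\;=\;\int\frac{-\mc L u_{j,R}(y)}{c+u_{j,R}(y)}\,\alpha_j(\dd y+b_j).\]
For the $\beta_n$ piece, since $|y+a_i^{(n)}|\geq R_{n,i}-|b_i|\to\infty$ uniformly on $\Supp(\beta_n)$, the bumps $u_{i,R}(\cdot+a_i^{(n)})$ eventually vanish there, so $g\equiv c$ on $\Supp(\beta_n)$; moreover, writing $(\mc L u_{i,R})(y+a_i^{(n)})=\sum_{z\in\Supp u_{i,R}} q(z-y-a_i^{(n)})\,u_{i,R}(z)$, the finiteness of $\Supp u_{i,R}$ and the decay of $q$ give $\sup_{y\in\Supp(\beta_n)}|(\mc L g)(y)|\to 0$, so this contribution is $o(1)$.

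Summing over $j$ and then taking supremum over $b_1,\ldots,b_k\in\Z^d$ (each summand $I_j$ depending only on its own $b_j$) and over the choice of $k$ orbits and their representatives in $\xi$, together with the standard diagonal argument that transfers the lower bound from the extracted subsequence back to the original one, delivers~\eqref{eq:Lambda}. The main technical obstacle is the treatment of $\beta_n$: wide separation alone does not control the non-local operator $\mc L$ on $\Supp(\beta_n)$, and one must combine the quantitative support bound $R_{n,i}\to\infty$ from Corollary~\ref{cor:decomposition} with the decay of the jump kernel $q$ at infinity provided by summability in (A2). The analogous cross-term analysis inside the $\alpha_{n,j}$ piece is structurally similar but easier, since decay there is driven directly by $|a_{n,i}-a_{n,j}|\to\infty$ and the compact support of $u_{i,R}$.
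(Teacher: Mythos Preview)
Your proposal is correct and follows essentially the same route as the paper: invoke Corollary~\ref{cor:decomposition}, place the bumps of $g$ at the (shifted) concentration centers $a_{n,i}-b_i$, and show that all cross terms and the $\beta_n$ contribution vanish using the compact support of $u_{i,R}$ together with the decay of the jump kernel from~(A2). The only organisational difference is that the paper splits the \emph{integrand} according to whether $x$ lies in one of the supports $U_j$, whereas you split the \emph{measure} as $\mu_n=\sum_j\alpha_{n,j}+\beta_n$ and pass to the limit in each piece via the change of variables $y=x+a_{n,j}$ and weak convergence; your inline treatment of the $\beta_n$ term is precisely the content of the paper's Lemma~\ref{lem:technical}. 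One small remark: your phrase ``standard diagonal argument'' for recovering the full $\liminf$ from the subsequence is slightly imprecise---what is actually used is that one first passes to a subsequence realising the $\liminf$ and then applies Corollary~\ref{cor:decomposition} to that subsequence---but this is routine and the paper glosses over the same point.
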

Before diving into the proof of the above lemma we need to formulate a technical result, which will be useful in the proof of Lemma~\ref{lem:Ftildelower} and for the forthcoming results.
\begin{lemma}\label{lem:technical}
	Let $u$ be a non-negative function with compact support and $(\alpha_n)_{n\in \mathds{N}}$ be a sequence of sub-probability measures such that
	\begin{equation}\label{eq:supp}
	\lim_{n\to\infty}\mathrm{dist}(\Supp(\alpha_n),\Supp(u))=\infty\,,
	\end{equation}
	where $\mathrm{dist}(A,B)$ denotes the distance in the $\ell^1$-norm between two sets $A,B\subseteq\Z^d$.
	Under the above condition the following holds for all $c>0$,
	\begin{equation}\label{eq:Lutozero}
	\lim_{n\to\infty}\int \frac{-(\mc L u)(x)}{c+u(x)}\alpha_n(\dd x)=0\,.
	\end{equation}
\end{lemma}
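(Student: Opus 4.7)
The plan is to observe that for $x$ sufficiently far from $\Supp(u)$, the denominator $c+u(x)$ collapses to $c$, while the numerator $-(\mc Lu)(x)$ is uniformly small, thanks to the translation invariance of the rates and their summability. Since $\alpha_n$ is concentrated on such far-away points for large $n$, the integral must vanish.

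First I would set $b(z):=a_{0,z}$. By Assumption $\rm{(A4)}$ the rates are translation invariant, so $a_{x,y}=b(y-x)$, and Assumption $\rm{(A2)}$ then gives
\begin{equation*}
\sum_{z\in\Z^d}b(z)=\sum_{y\in\Z^d}a_{0,y}<\infty.
\end{equation*}
Consequently the tail $T(D):=\sum_{|z|\geq D}b(z)$ vanishes as $D\to\infty$. Next, set $D_n:=\mathrm{dist}(\Supp(\alpha_n),\Supp(u))$, which tends to infinity by hypothesis \eqref{eq:supp}. For $n$ large enough, every $x\in\Supp(\alpha_n)$ lies outside $\Supp(u)$, so $u(x)=0$ and $c+u(x)=c$; moreover, for such $x$,
\begin{equation*}
(\mc Lu)(x)=\sum_{y\in\Z^d}a_{x,y}u(y)=\sum_{y\in\Supp(u)}b(y-x)\,u(y).
\end{equation*}
Since $|y-x|\geq D_n$ whenever $y\in\Supp(u)$, I obtain the uniform pointwise bound
\begin{equation*}
\left|\frac{-(\mc Lu)(x)}{c+u(x)}\right|\leq \frac{\|u\|_\infty}{c}\,T(D_n)
\qquad\text{for every } x\in\Supp(\alpha_n).
\end{equation*}

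Integrating against the sub-probability measure $\alpha_n$, whose total mass is at most one, then yields
\begin{equation*}
\left|\int\frac{-(\mc Lu)(x)}{c+u(x)}\,\alpha_n(\dd x)\right|\leq \frac{\|u\|_\infty}{c}\,T(D_n)\xrightarrow[n\to\infty]{}0,
\end{equation*}
which is exactly \eqref{eq:Lutozero}. No step is genuinely delicate; the one point worth flagging is that Assumptions $\rm{(A2)}$ and $\rm{(A4)}$ together are precisely what upgrades ``the rates sum to something finite'' to ``the far-field rates sum to something small'', i.e.\ $T(D_n)\to 0$. The remaining ingredients—compact support of $u$, sub-probability of $\alpha_n$, and the distance hypothesis \eqref{eq:supp}—enter only as straightforward bookkeeping.
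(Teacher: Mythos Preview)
Your proof is correct and follows essentially the same route as the paper: once $\Supp(\alpha_n)$ and $\Supp(u)$ are disjoint the denominator reduces to $c$, and the numerator is controlled by the tail of the rates. The only difference is cosmetic---you invoke Assumption~$\rm{(A4)}$ explicitly to write $a_{x,y}=b(y-x)$ and thereby make the uniform tail decay $T(D_n)\to 0$ transparent, whereas the paper appeals directly to~$\rm{(A2)}$ and the distance hypothesis without spelling this out.
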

We first prove Lemma~\ref{lem:Ftildelower} accepting the validity of Lemma~\ref{lem:technical}. The proof of the latter will be given afterwards.
\begin{proof}
	Fix $b\in \R^d$ and $\alpha_{1},\alpha_{2}, \ldots, \alpha_{k}\in \xi$. We will follow the second step in the proof of~\cite[Theorem~3.2]{MV2016}. By Corollary~\ref{cor:decomposition} we can find $\ell\geq k$ and a subsequence $\mu_{n_k}$ that we will suppress from the notation such that we have the decomposition
	$$\mu_{n_k}=\sum_{j=1}^{\ell}\alpha_{n,j}+\beta_n$$
	so that for a suitable choice of $a_{n_{i,j}}$ that satisfy
	$$
	\lim_{n\to \infty}\vert a_{n,i}-a_{n,j}\vert=\infty \ \  \text{for} \  i\neq j
	$$ one additionally has that
	\begin{equation}\label{eq:b}
	\begin{aligned}
	\alpha_{n,j}* \delta_{a_{n,j}+b}\to \alpha_{j}* \delta_b\,,\qquad \text{for }\qquad j\in \{1, 2, \ldots, k\}\,,
	\end{aligned}
	\end{equation}
	and such that the supports of the $\alpha_{n,j}$'s and $\beta_n$'s are all disjoint.
	% that\textbf{[D: Hmm, falta algo. Como você chegou nessa equação? No segundo somatório você tem a condição $i\neq j$ mas o que é i? Eu acho nós distinguimos os casos onde $x$ faz parte do suporte de $u_i$. Você consegue detalhar isso?]}
	%	\begin{align*}	\dfrac{\mathcal{-L}g(k,R,c, a_1, \ldots a_k,x)}{g(k, R,c,a_1, \ldots, a_k,x)}
	%=&\sum_{i=1}^{k}\dfrac{\mathcal{L}u_{i,R}(x-a_{n,i})}{c+u_{i,R}(x-a_{n,i})}\\
	%		=\sum_{i=1}^{k}\dfrac{\mathcal{-L}u_{i,R}(x-a_{n,i})}{c+u_{i,R}(x-a_{n,i})}+ \sum_{i\neq j}\dfrac{\mathcal{-L}u_{i,R}(x-a_{n,i})}{c+u_{j,R}(x-a_{n,j})}
	%	\end{align*}

	%$$\sum_{i\neq j}^{k}\dfrac{\mathcal{-L}u_{j,R}(x-a_{n,j})}{c+u_{i,R}(x-a_{n,i})}\geq 0$$
	%we have that
	%\begin{align}\label{eq: Ineq}
	%\dfrac{\mathcal{-L}g(k,R,c, a_1, \ldots a_k,x)}{g(k, R,c,a_1, \ldots, a_k,x)} \ \geq\sum_{i=1}^{k} &\dfrac{\mathcal{-L}u_{i,R}(x-a_{n,j})}{c+u_{i,R}(x-a_{n,i})}
	%\end{align}
	\ \ \ \ \ \ \ \ \ \ \

	%	\begin{align}\label{eq: Ineq}
	%		\nonumber	\liminf_{n\to\infty}\int \dfrac{\mathcal{-L}g(x)}{g(x)}\mu_{n}(dx)=&\liminf_{n\to\infty}\bigg(\int\sum_{i=1}^{k}\dfrac{\mathcal{-L}u_{i,R}(x-a_{n,i})}{c+u_{i,R}(x-a_{n,i})}\mu_n(\dd x)+
	%		\int\sum_{i\neq j}\dfrac{\mathcal{-L}u_{i,R}(x-a_{n,i})}{c+u_{j,R}(x-a_{n,j})}\mu_n(\dd x)\bigg)\\
	%		\geq & \liminf_{n\to\infty}\int\sum_{i=1}^{k}\dfrac{\mathcal{-L}u_{i,R}(x-a_{n,i})}{c+u_{i,R}(x-a_{n,i})}\mu_{n}(\dd x)+
	%		\liminf_{n\to\infty}\int \sum_{i\neq j}\dfrac{\mathcal{-L}u_{i,R}(x-a_{n,i})}{c+u_{j,R}(x-a_{n,j})}\mu_{n}(\dd x)\\	=&\sum_{i=1}^{k}\int\dfrac{\mathcal{-L}u_{i,R}(x)}{c+u_{i,R}(x)}\alpha_i(dx+b)
	%	\end{align}
	%	\textbf{[D: Como você chegou na última igualdade? Não precisamos o lema emcima para isso?]}
	%Now, since $\alpha_{n,i}$ is widely separated from $\beta_n$ as well $\alpha_{n,i}$ for $i\neq j$, we have that
	%	for $b \in \R^d$ and $(\alpha_{i1}, \ldots, \alpha_{ik})\in \xi$, where the last equality is from Lemma \ref{lem:technical}.
	For $n$  large enough we have that $|a_{n,i}-a_{n,j}|\geq4R $ so that the supports of the $\{u_{i, R}(\cdot-a_{n,i})\}_i$ are mutually disjoint.
	Now defining
	\begin{align*}
	\Supp (u_{i,R}(\cdot-a_{n,i}))=U_i, \ \forall i=1, \ldots, k
	\end{align*}
	and fixing $x\in\Z^d$
	we have the two following cases:
	\begin{itemize}
		\item There exist $j\in \{1, \ldots, k\}$ such that $x \in U_j$ and consequently $x\notin U_i\ \ \forall i\neq j$. In this case we have that
		\begin{align*}
		\dfrac{\mathcal{-L}g(k,R,c, a_1, \ldots a_k,x)}{g(k, R,c,a_1, \ldots, a_k,x)}
		%=&\sum_{i=1}^{k}\dfrac{\mathcal{L}u_{i,R}(x-a_{n,i})}{c+u_{i,R}(x-a_{n,i})}\\
		&=\dfrac{\sum_{i=1}^{k}\mathcal{-L}u_{i,R}(x-a_{n,i})}{c+\sum_{i=1}^{k}u_{i,R}(x-a_{n,i})}\\
		&=\dfrac{\sum_{i=1}^{k}\mathcal{-L}u_{i,R}(x-a_{n,i})}{c+u_{j,R}(x-a_{n,j})}\\
		&=\sum_{i=1}^{k}\dfrac{\mathcal{-L}u_{i,R}(x-a_{n,i})}{c+u_{j,R}(x-a_{n,j})}.
		\end{align*}
		\item $\forall j \in \{ 1, \ldots, k\}$ we have that $x \notin U_j$. Then, $u_{j,R}(x-a_i)=0 $ for all $j$ and in this case we have that
		\begin{align*}
		\dfrac{\mathcal{-L}g(k,R,c, a_1, \ldots a_k,x)}{g(k, R,c,a_1, \ldots, a_k,x)}
		%=&\sum_{i=1}^{k}\dfrac{\mathcal{L}u_{i,R}(x-a_{n,i})}{c+u_{i,R}(x-a_{n,i})}\\
		&=\dfrac{\sum_{i=1}^{k}\mathcal{-L}u_{i,R}(x-a_{n,i})}{c+\sum_{i=1}^{k}u_{i,R}(x-a_{n,i})}\\
		%=&\sum_{i=1}^{k}\dfrac{\mathcal{L}u_{i,R}(x-a_{n,i})}{c+u_{i,R}(x-a_{n,i})}\\
		&=\dfrac{\sum_{i=1}^{k}\mathcal{-L}u_{i,R}(x-a_{n,i})}{c}\\
		%	&=\sum_{i=1}^{k}\dfrac{\mathcal{-L}u_{i,R}(x-a_{n,i})}{c}\\
		&=\sum_{i=1}^{k}\dfrac{\mathcal{-L}u_{i,R}(x-a_{n,i})}{c+u_{i,R}(x-a_{n,i})}.
		\end{align*}
	\end{itemize}
	Summing up the two contributions plus some elementary calculations yield that
	\begin{align*}
	\dfrac{\mathcal{-L}g(k,R,c, a_1, \ldots a_k,x)}{g(k, R,c,a_1, \ldots, a_k,x)}
	&=
\sum_{j=1}^{k}\sum_{i=1}^{k}\dfrac{\mathcal{-L}u_{i,R}(x-a_{n,i})}{c+u_{j,R}(x-a_{n,j})}\mathds{1}_{\{x\in U_j\}}
	+ \sum_{i=1}^{k}\dfrac{\mathcal{-L}u_{i,R}(x-a_{n,i})}{c+u_{i,R}(x-a_{n,i})}\mathds{1}_{\{x\notin \cup_{j=1}^{k}U_j\}}\\
	&= \sum_{j=1}^{k}\mathds{1}_{\{x\in U_j\}}\Biggl\{	\sum_{i=1,i\neq j}^{k}\dfrac{\mathcal{-L}u_{i,R}(x-a_{n,i})}{c+u_{j,R}(x-a_{n,j})} + \dfrac{\mathcal{-L}u_{j,R}(x-a_{n,j})}{c+u_{j,R}(x-a_{n,j})}\Biggr\}\\
	&
	+\mathds{1}_{\{x\notin \cup_{j=1}^{k}U_j\}} \sum_{i=1}^{k}\dfrac{\mathcal{-L}u_{i,R}(x-a_{n,i})}{c+u_{i,R}(x-a_{n,i})}\\
	&=\sum_{j=1}^{k}\mathds{1}_{\{x\in U_j\}}	\sum_{i=1,i\neq j}^{k}\dfrac{\mathcal{-L}u_{i,R}(x-a_{n,i})}{c+u_{j,R}(x-a_{n,j})}
	+\sum_{j=1}^{k} \dfrac{\mathcal{-L}u_{j,R}(x-a_{n,j})}{c+u_{j,R}(x-a_{n,j})}\\
	&	+\sum_{j=1}^{k}\sum_{i=1, i\neq j}^{k}
	\mathds{1}_{\{x\in U_j\}} \dfrac{\mathcal{L}u_{i,R}(x-a_{n,i})}{c+u_{i,R}(x-a_{n,i})}.
	\end{align*}
	%Since the functions $u_i\in \mathcal{U}$ are smooth, non-negative with compact support, we can define
	% $$Supp (u_i(\cdot-a_{n,i}))=U_i, \ \forall i=1, \ldots, k.$$
	For $x\in U_j$ we have that
	\begin{equation}\begin{split}\label{eq:firsttozero}
	\int_{ U_j}\bigg|\dfrac{\mathcal{-L}u_{i,R}(x-a_{n,i})}{c+u_{j,R}(x-a_{n,j})}\bigg|\mu_{n}( dx)
	&=\int_{U_j}\bigg|\dfrac{\sum_{y\in U_i}a_{x,y}[u_{i,R}(x-a_{n,i})-u_{i,R}(y-a_{n,i})]}{c+u_{j,R}(x-a_{n,j})}\bigg|\mu_{n}(dx)\\
	&=\int_{U_j}\dfrac{\sum_{y\in U_i}a_{x,y}u_{i,R}(y-a_{n,i})}{c+u_{j,R}(x-a_{n,j})}\mu_{n}(dx).
	%&\leq \int_{U_j}\dfrac{\sum_{y\in U_i}a_{xy}u_{i,R}(y-a_{n,i})}{c}\alpha_{n,j}(dx)
	\end{split}
	\end{equation}
	Using that $u$ is non-negative and bounded from above and that each $\mu_n$ is a sub-probability measure we can estimate the above by some proportionality constant times
\begin{equation}
    \sum_{y:\;|y-x|\geq d(U_i,U_j)}a_{x,y}\,.
\end{equation}
	The latter goes to zero as $n\to\infty$ by  $\rm{(A2)}$.
	By the  same argument, we have that
	\begin{align*}
	\int_{ U_j}\sum_{\substack{i=1\\ i\neq j}}^{k}\dfrac{\mathcal{-L}u_{i,R}(x-a_{n,i})}{c+u_{i,R}(x-a_{n,i})}\mu_{n}(dx)
	\end{align*}
	tends to zero as $n\to\infty$. Thus,
	\begin{equation}
	\begin{split}
	\liminf_{n\to\infty}\int \dfrac{\mathcal{-L}g(x)}{g(x)}\mu_{n}(dx)=&\liminf_{n\to\infty}\bigg(\int \sum_{j=1}^{k}\mathds{1}_{\{x\in U_j\}}	\sum_{i=1,i\neq j}^{k}\dfrac{\mathcal{-L}u_{i,R}(x-a_{n,i})}{c+u_{j,R}(x-a_{n,j})}\mu_{n}(dx)\\
	&+\int\sum_{j=1}^{k} \dfrac{\mathcal{-L}u_{j,R}(x-a_{n,j})}{c+u_{j,R}(x-a_{n,j})}\mu_{n}(dx)\\
	&+\int\sum_{j=1}^{k}\sum_{\underset{i\neq j}{i=1}}^{k}
	\mathds{1}_{\{x\in U_j\}} \dfrac{\mathcal{-L}u_{i,R}(x-a_{n,i})}{c+u_{i,R}(x-a_{n,i})}\,\mu_n(dx)\,\Big)\,.
	\end{split}\end{equation}
	This can be lower bounded by
	\begin{align}
&\liminf_{n\to\infty}\int \sum_{j=1}^{k}\mathds{1}_{\{x\in U_j
		\}}	\sum_{i=1,i\neq j}^{k}\dfrac{\mathcal{-L}u_{i,R}(x-a_{n,i})}{c+u_{j,R}(x-a_{n,j})}\mu_{n}(dx)\label{eq:2.17}\\
	&+\liminf_{n\to\infty}\int\sum_{j=1}^{k}\dfrac{(\mathcal{-L}u_{j,R})(x-a_{n,j})}{c+u_{j,R}(x-a_{n,j})}\mu_{n}(dx)\label{eq:2.18}\\
	&+\sum_{j=1}^{k}\liminf_{n\to\infty} \int\sum_{\underset{i\neq j}{i=1}}^{k}
	\mathds{1}_{\{x\in U_j\}} \dfrac{\mathcal{-L}u_{i,R}(x-a_{n,i})}{c+u_{i,R}(x-a_{n,i})}\mu_n(dx)\label{eq:2.19}	\\
	&=\sum_{j=1}^{k}\int\dfrac{\mathcal{-L}u_{j,R}(x)}{c+u_{j,R}(x)}\alpha_j(dx)\,.
	\end{align}
	Here, we used the considerations after Equation~\eqref{eq:firsttozero} to show that \eqref{eq:2.17} and \eqref{eq:2.19} converge to zero, and we used Lemma~\ref{lem:technical} to deal with \eqref{eq:2.18}.
	Since this argument works for any collection $\alpha_1, \ldots, \alpha_k\in\xi$ and since we can redefine $a_{n,i}':=a_{n,i}+b_i$, using~\eqref{eq:b} we can conclude that
	\begin{align*}
	\liminf_{n\to\infty}\widetilde{F}(u_1, \ldots,u_k, c, R, \wtilde{\mu}_n)&\geq\sup_{\alpha_1, ..., \alpha_k\in \xi}\sum_{j=1}^{k}\sup_{b\in\R^d}\int\dfrac{\mathcal{-L}u_{j,R}(x)}{c+u_{j,R}(x)}\alpha_j(d x+b)
	\end{align*}
	and the lemma is proved.
\end{proof}

Proof of Lemma \ref{lem:technical}.  Let $u \in \mathcal{U}$ and let $(\alpha_{n})$ be a sequence of sub-probability measures.
We have that
\begin{align*}
(\mathcal{L}u)(x)= \sum_{y}a_{x,y}[u(y)-u(x)].
\end{align*}
By \eqref{eq:supp} we have that $ \Supp(\alpha_{n})\cap\Supp(u)=\varnothing$
for  $n$ large enough. For such $n$ we can write
\begin{align*}
\int\dfrac{-(\mathcal{L}u)(x)}{c+u(x)}\alpha_{n}(d x)=&\int \dfrac{-(\mathcal{L}u)(x)}{c}\alpha_{n}(dx)\\
&	\leq \dfrac{1}{c}\sup_{x\in \Z^d} \sum_{ y\in\Supp(\alpha_n)} a_{x,y} \,.
\end{align*}
The latter term goes to zero by Assumption~\rm{(A2)} and the hypothesis of the lemma.

% Since $\wtilde{\mu}_n\to \xi=\{\wtilde{\alpha}_j\}$, then for $j=1, 2, \ldots, k$ we can decompose $\mu_{n_k}$, subsequence of $\mu_{n}$, as
%$$\mu_{n_k}=\sum_{j=1}^{l}\alpha_{n,j}+\beta_n$$
% with $ \alpha_{n,j}*\delta_{a_{n,j}+b}\Rightarrow \alpha_j*\delta_b=\alpha_j(\cdot+b)$, where $l$ is large enough that $1,2, \ldots k\in \{1, 2, \ldots, l\}$

\begin{lemma}\label{lem:ratefct}
	For $\wtilde{\Lambda}$ defined in \eqref{eq:Lambda} and $\wtilde{I}$ defined in \eqref{eq:Inew}, we can write
	\begin{align*}
	\wtilde{I}(\xi)=	\sup_{\substack{R, c>0, \ k\in \mathds{N}\\ u_1, \ldots, u_k\in U}}\wtilde{\Lambda}(\xi,R,c, u_1, \ldots, u_k)
	\end{align*}
\end{lemma}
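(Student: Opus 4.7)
We establish the two inequalities $\sup\wtilde\Lambda\leq\wtilde I(\xi)$ and $\sup\wtilde\Lambda\geq\wtilde I(\xi)$ separately. Following the decomposition used in the proof of Lemma~\ref{lem:Ftildelower}, the supremum defining $\wtilde\Lambda$ is taken over representatives $\alpha_1,\ldots,\alpha_k$ of $k$ pairwise distinct orbits in $\xi$; otherwise the upper bound below would fail for duplicated orbits.

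\textbf{Upper bound.} Fix $R,c>0$, $u_1,\ldots,u_k\in\mc U$, representatives $\alpha_j$ of distinct orbits in $\xi$, and shifts $b_j\in\Z^d$. Each $w_j:=c+u_{j,R}$ is bounded above, satisfies $w_j\geq c>0$, and hence lies in the domain $\mc D$; moreover $\mc L w_j=\mc L u_{j,R}$ since $\mc L$ annihilates constants. Applying the Donsker--Varadhan variational characterization~\eqref{eq:Iclassical} and the translation invariance of $I$ from Lemma~\ref{lem:PropertiesI},
\begin{equation*}
\int\frac{-\mc L u_{j,R}(x)}{c+u_{j,R}(x)}\,\alpha_j(dx+b_j)
=\int\frac{-\mc L w_j(x)}{w_j(x)}\,(\alpha_j*\delta_{b_j})(dx)
\leq I(\alpha_j*\delta_{b_j})=I(\alpha_j).
\end{equation*}
Summing over $j$ and taking the respective suprema yields $\wtilde\Lambda(\xi,R,c,u_1,\ldots,u_k)\leq\sum_{j=1}^k I(\alpha_j)\leq\wtilde I(\xi)$, where the last step uses distinctness.

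\textbf{Lower bound.} Fix $\epsilon>0$ and $M<\wtilde I(\xi)$ (arbitrarily large if $\wtilde I(\xi)=\infty$). Pick $k$ distinct orbits $\wtilde\alpha_1,\ldots,\wtilde\alpha_k\in\xi$ with representatives satisfying $\sum_j I(\alpha_j)\geq M$ and each $I(\alpha_j)<\infty$. For each $j$, approximate the DV-optimal test function $\sqrt{\alpha_j}$ by
\begin{equation*}
u_j:=c'_j+v_j\in\mc U,\qquad v_j(x):=\sqrt{\alpha_j(x)}\,\mathds{1}_{B(0,N_j)}(x),
\end{equation*}
with $c'_j>0$ small and $N_j$ large. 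The identity $\int\frac{-\mc L\sqrt{\alpha_j}}{\sqrt{\alpha_j}}\alpha_j(dx)=I(\alpha_j)$, combined with a dominated convergence argument (using Assumption~(A2) to bound $|\mc L v_j|$ uniformly by $2\sup_x\sum_y a_{x,y}$ and the lower bound $u_j\geq c'_j$ to keep the denominator positive), yields $\int\frac{-\mc L u_j}{u_j}\alpha_j(dx)\to I(\alpha_j)$ as $N_j\to\infty$ and $c'_j\to 0^+$. We therefore choose $N_j,c'_j$ so that $\int\frac{-\mc L u_j}{u_j}\alpha_j(dx)\geq I(\alpha_j)-\epsilon/(2k)$. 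To match the integrand appearing in $\wtilde\Lambda$, pick $R$ so large that $\Supp(v_j)\subset B(0,R)$ for every $j$; then $u_{j,R}(x)=c'_j\varphi(x/R)+v_j(x)$. For each fixed $x$, as $R\to\infty$ one has $\mc L\varphi(\cdot/R)(x)=\sum_y a_{x,y}(\varphi(y/R)-\varphi(x/R))\to 0$ by dominated convergence in the $y$-sum via~(A2), so $u_{j,R}(x)\to u_j(x)$ and $\mc L u_{j,R}(x)\to\mc L u_j(x)$ pointwise. Together with $c+u_{j,R}\geq c$ and the uniform bound $|\mc L u_{j,R}|\leq(c'_j+2)\sup_x\sum_y a_{x,y}$, a further dominated convergence argument gives $\int\frac{-\mc L u_{j,R}}{c+u_{j,R}}\alpha_j(dx)\to\int\frac{-\mc L u_j}{u_j}\alpha_j(dx)$ as $R\to\infty$ and $c\to 0^+$. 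Choosing $R$ large and $c$ small uniformly in $j$ and setting $b_j=0$, we obtain $\wtilde\Lambda(\xi,R,c,u_1,\ldots,u_k)\geq\sum_j I(\alpha_j)-\epsilon\geq M-\epsilon$, and sending $\epsilon\to 0$, $M\nearrow\wtilde I(\xi)$ concludes.

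\textbf{Main obstacle.} The technical heart is the double approximation in the lower bound: first replacing the optimal $\sqrt{\alpha_j}$ by a compactly-supported $u_j\in\mc U$ close-to-optimal in the DV variational problem, then accommodating the cutoff $\varphi(\cdot/R)$ and the positive regularizer $c$ that appear in $\wtilde\Lambda$. Both stages reduce to dominated convergence, and both crucially exploit Assumption~(A2) to uniformly control $\mc L$ acting on bounded functions, while the strict positivity $c+u_{j,R}\geq c>0$ of the denominator keeps the integrand regular throughout.
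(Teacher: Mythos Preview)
Your proof is correct and follows the same overall strategy as the paper: both reduce the claim to the identification
\[
I(\alpha)=\sup_{c>0,\,u\in\mc U}\int\frac{-\mc L u}{c+u}\,\alpha(dx)
\]
for each single component $\alpha$, together with the observation $\wtilde I(\xi)=\sup_k\sup_{\alpha_1,\ldots,\alpha_k}\sum_i I(\alpha_i)$ (over distinct orbits, as you rightly flag). The paper simply invokes this identification as the classical Donsker--Varadhan formula and then passes to the conclusion in two lines, tacitly absorbing both the shift $b$ and the cutoff $\varphi(\cdot/R)$ into the supremum. You instead work this out by hand: you exhibit the near-optimal test function $u_j=c'_j+\sqrt{\alpha_j}\,\mathds 1_{B(0,N_j)}$ (exploiting Assumption~(A6) to know that $\sqrt{\alpha_j}$ is the optimizer) and then carry out the two-stage dominated-convergence argument to remove first the truncation and then the cutoff and regularizer. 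Your route is more self-contained and makes the role of~(A2) explicit, at the price of some extra bookkeeping; the paper's route is shorter but relies on the reader accepting that restricting to $\mc U$ and inserting the cutoff does not change the supremum.
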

\begin{proof}
	By the classical rate function $I$ defined in  \eqref{eq:Iclassical} for any $\alpha\in \mathcal{M}_{\leq 1}(\R^d)$ we can identify $I$ as
	\begin{align*}
	I(\alpha) = \sup_{\substack{c> 0\\ u\in \mc U}} \int  \frac{-\mc L u(x)}{c+u(x)} \alpha(\dd x)
	\end{align*}
	Therefore, for every $k\in \mathds{N}$
	\begin{align*}
	\sup_{\substack{R,c> 0\\ u_1, \ldots, u_k\in \mc U}}\wtilde{\Lambda}(\xi, R, c, u_1, \ldots, u_k)=\sup_{\alpha_1, \ldots, \alpha_k\in\xi}\sum_{i=1}^{k}	I(\alpha_i)\,.
	\end{align*}
	Since
	\begin{align*}
	\wtilde{I}(\xi) =\sup_{k\in\bb N}\sup_{\alpha_1,\ldots, \alpha_k\in\xi} \sum_{i=1}^{k}I(\alpha_i)
	\end{align*}
	we can conclude.
\end{proof}

We now come to the proof of Proposition~\ref{prop:UB}.
\begin{proof}
	Since $\widetilde{\mc X}$ is a compact set, to prove \eqref{eq:Upp} it is enough to prove that if $\xi\in \widetilde{\mc X}$ and $B_{\delta}(\xi)$ is a ball of radius $\delta$ around $\xi$ then
	\begin{align*}\label{eq:UBdelta}
	\limsup_{\delta\to 0}\limsup_{t\to \infty}\dfrac{1}{t}\log
	Q_t(B_\delta(\xi))\leq-\widetilde{I}(\xi).
	\end{align*}
	In fact, let $F\subseteq \widetilde{\mc X}$ be a closed set. Given $\eps>0$ and $\xi\in\widetilde{\mc X}$ there is $\delta(\xi)=\delta(\xi,\eps)$ such that
	\begin{equation*}
	\limsup_{t\to \infty}\dfrac{1}{t}\log
	Q_t(B_{\delta(\xi)}(\xi))\leq-\widetilde{I}(\xi) +\eps\,.
	\end{equation*}
	Writing
	$$F\subset\bigcup_{\xi\in F}B_{\delta(\xi)}(\xi)\,,
	$$
	since $F$ as a closed set in the compact space $\widetilde{\mc X}$ is compact as well
	there exist $\xi_1, \xi_2, \ldots, \xi_n\in F$ such that
	\begin{align*}
	F\subset \bigcup_{i=1}^{n}B_{\delta(\xi_i)}(\xi_i).
	\end{align*}
	Therefore
	\begin{align*}
	\limsup_{t\to \infty}\dfrac{1}{t}\log Q_t(F)&\leq \limsup_{t\to \infty}\dfrac{1}{t}\log\bigg(\sum_{i=1}^{n}Q_t(B_{\delta(\xi_i)}(\xi_i))\bigg)\\
	&=\max_{i=1}^{n}\bigg\{\limsup_{t\to \infty}\dfrac{1}{t}\log Q_t(B_{\delta(\xi_i)}(\xi_i))
	\bigg\}\\
	&\leq\max_{i=1}^{n}\{-\widetilde{I}(\xi_i)\}+\varepsilon\\
	&\leq-\inf_{\xi\in F}\widetilde{I}(\xi)+\varepsilon.
	\end{align*}
	Fix $k\in\bb N$, $u_1, \ldots, u_k\in\mc U$, $c,R>0$ and write $F=F(u_1,\ldots, u_k, c, R, \wtilde\mu)$, where $F(u_1,\ldots, u_k, c, R, \cdot):\widetilde{\mc M}_1(\R^d)\to \R$ is defined in \eqref{eq:F}. Then
	\begin{align*}
	Q_t(B_\delta(\xi))=\mathds{P}(\widetilde{L}_t\in B_\delta(\xi))&\leq \mathds{P}(F(\widetilde{L}_t)\in F(B_\delta(\xi)))\\
	&\leq\mathds{P}(F(\widetilde{L}_t)\geq \inf_{\wtilde{\mu}\in B_\delta(\xi)}F(\wtilde{\mu}))\\
	&\leq \exp \Big(-t\inf_{\wtilde{\mu}\in B_\delta(\xi)}F(\wtilde{\mu})\Big)\mathds{E}\Big[\exp(tF(\widetilde{L}_t))\Big].
	\end{align*}
	From Lemma~\ref{lem:UBfundamental} we have that
	\begin{align*}
	\limsup_{t\to \infty}\dfrac{1}{t}\log Q_t(B_\delta(\xi))&\leq -\inf_{\wtilde{\mu}\in B_\delta(\xi)}F(\wtilde{\mu})+\limsup_{t\to \infty}\dfrac{1}{t}\log\mathds{E}\Big[\exp(tF(\widetilde{L}_t))\Big]\\
	&\leq -\inf_{\wtilde{\mu}\in B_\delta(\xi)}F(\wtilde{\mu})\,.
	\end{align*}
	%		and  identifying
	%		\begin{align*}
	%			\inf_{\wtilde{\mu}\in N_\delta}H(\wtilde{\mu})=\inf_{\wtilde{\mu}\in N_\delta}\widetilde{F}(u_1, \ldots, u_k, c, R, \wtilde{\mu}).
	%		\end{align*}
	%		we have that
	%		\begin{align*}
	%			\limsup_{t\to \infty}\dfrac{1}{t}\log\mathds{Q}_t(N_\delta)\leq-\inf_{\wtilde{\mu}\in N_\delta}\widetilde{F}(u_1, \ldots, u_k, c, R, \wtilde{\mu})
	%		\end{align*}
	%		Now, fix $n\in \ds N$. For all $\delta>0$ there exist  $
	%		\wtilde{\mu}_{\delta,n} \in N_\delta$ such that
	%		\begin{align*}
	%			\inf_{\wtilde{\mu}\in N_\delta}\wtilde{F}(u_1, \ldots, u_k, c, R, \wtilde{\mu})\geq\wtilde{F}(u_1, \ldots, u_k, c, R, \wtilde{\mu}_{\delta,n})-\dfrac{1}{n}
	%		\end{align*}
	%		and therefore
	%		\begin{align*}
	%			\liminf_{\delta\to 0}	\inf_{\wtilde{\mu}\in N_\delta}\wtilde{F}(u_1, \ldots, u_k, c, R, \wtilde{\mu})&\geq \liminf_{\delta\to 0}\wtilde{F}(u_1, \ldots, u_k, c, R, \wtilde{\mu}_{\delta,n})-\dfrac{1}{n}\\
	%			&\geq\Lambda(u_1, \ldots, u_k, c, R, \xi)-\dfrac{1}{n}.
	%		\end{align*}
	%
	Then, by Lemma~\ref{lem:Ftildelower}   for all $k\in\bb N$, $u_1,\ldots, u_k\in \mc U$, $c,R>0$
	\begin{align*}
	\limsup_{\delta\to 0}\limsup_{t\to \infty}\dfrac{1}{t}\log Q_t(B_\delta(\xi))\leq-\Lambda(u_1, \ldots, u_k, C, R, \xi)\,.
	\end{align*}
	Hence, by Lemma~\ref{lem:ratefct}
	\begin{align*}
	\limsup_{\delta\to 0}\limsup_{t\to \infty}\dfrac{1}{t}\log Q_t(B_\delta(\xi))&\leq \inf_{\substack{R, c>0, \ k\in \mathds{N}\\ u_1, \ldots, u_k}} (-\Lambda(u_, \ldots, u_k, c, R, \xi))\\
	&=-\sup_{\substack{R, c>0, \ k\in \mathds{N}\\ u_1, \ldots, u_k}} \Lambda(u_, \ldots, u_k, c, R, \xi)\\
	&=-\widetilde{I}(\xi)
	\end{align*}
	and the result is proved.
\end{proof}
\subsection{Lower semicontinuity}
\begin{lemma}
	If $\xi_n\to\xi$ in $\widetilde{\mc X}$, then
	\begin{align*}
	\liminf_{n\to\infty}\widetilde{I}(\xi_n)\geq\widetilde{I}(\xi).
	\end{align*}
\end{lemma}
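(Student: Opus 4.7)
The plan is to use the variational representation
\[
\wtilde{I}(\xi) = \sup_{k,\,u_1,\ldots,u_k,\,c,R} \wtilde{\Lambda}(\xi, R, c, u_1, \ldots, u_k)
\]
from Lemma~\ref{lem:ratefct} together with the liminf bound of Lemma~\ref{lem:Ftildelower}. The bridge between the two is the pointwise inequality $\wtilde{F}(u_1, \ldots, u_k, c, R, \xi_n) \leq \wtilde{I}(\xi_n)$, which I claim holds for every $\xi_n \in \wtilde{\mc X}$ and every fixed admissible choice of $k \in \bb N$, $c, R > 0$ and $u_1, \ldots, u_k \in \mc U$.

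To establish this inequality, fix $a_1, \ldots, a_k \in \Z^d$ with $\min_{i \neq j} |a_i - a_j| \geq 4R$ and consider the associated $g$ from~\eqref{eq:g}. Writing $g = c + h$ with $h := \sum_i u_i(\cdot + a_i) \varphi((\cdot + a_i)/R) \geq 0$ compactly supported, one observes that $g = \tfrac{c}{2} + \tilde u$ with $\tilde u := \tfrac{c}{2} + h \in \mc U$, so that $\tfrac{-\mc L g}{g} = \tfrac{-\mc L \tilde u}{c/2 + \tilde u}$ is an admissible integrand in the variational characterization of $I$ recalled at the start of the proof of Lemma~\ref{lem:ratefct}. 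Writing $\xi_n = \{\wtilde{\alpha}_j\}_j$ and using linearity of the integral,
\[
\int \frac{-\mc L g(x)}{g(x)} \sum_j \alpha_j(\dd x) = \sum_j \int \frac{-\mc L g(x)}{g(x)} \, \alpha_j(\dd x) \leq \sum_j I(\alpha_j) = \wtilde{I}(\xi_n),
\]
and taking the supremum over admissible $a_1, \ldots, a_k$ yields the claimed bound.

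Combining this with Lemma~\ref{lem:Ftildelower} applied to the sequence $(\xi_n)$ converging to $\xi$ gives, for every admissible choice of $k$, $c$, $R$ and $u_1, \ldots, u_k \in \mc U$,
\[
\liminf_{n \to \infty} \wtilde{I}(\xi_n) \geq \liminf_{n \to \infty} \wtilde{F}(u_1, \ldots, u_k, c, R, \xi_n) \geq \wtilde{\Lambda}(\xi, R, c, u_1, \ldots, u_k).
\]
Taking the supremum over these parameters on the right and invoking Lemma~\ref{lem:ratefct} yields $\liminf_n \wtilde{I}(\xi_n) \geq \wtilde{I}(\xi)$, which is the desired lower semicontinuity.

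The main technical point is the variational bound $\wtilde{F} \leq \wtilde{I}$ at the level of $\wtilde{\mc X}$: it hinges on the linearity of integration in the measure (which turns the integral of $-\mc L g/g$ against $\xi_n$ into a sum over orbits) and on the mild rewriting of constants that places $g$ into the admissible form for the variational characterization of $I$. Should Lemma~\ref{lem:Ftildelower} turn out to be available only for sequences in $\wtilde{\mc M}_1$, one can instead use Lemma~\ref{lem:approx} and a diagonal extraction to replace each $\xi_n$ (which, by passing to a subsequence, may be assumed to satisfy $\wtilde{I}(\xi_n) < \infty$) by some $\wtilde{\nu}_n \in \wtilde{\mc M}_1$ with $\wtilde{\nu}_n \to \xi$ and $\wtilde{I}(\wtilde{\nu}_n) \leq \wtilde{I}(\xi_n) + 1/n$, and then run the same argument along $(\wtilde{\nu}_n)$.
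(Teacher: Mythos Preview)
Your overall strategy is sound and matches the paper's: both proofs reduce lower semicontinuity to the variational identity of Lemma~\ref{lem:ratefct}, and both ultimately rely on the computations underlying Lemma~\ref{lem:Ftildelower}. The pointwise bound $\int \frac{-\mc L g}{g}\, \nu(\dd x)\leq I(\nu)$ for a single (sub-)probability $\nu$, obtained by rewriting $g=c/2+\tilde u$ with $\tilde u\in\mc U$, is exactly the missing link between Lemma~\ref{lem:Ftildelower} and Lemma~\ref{lem:ratefct}, and is correct.

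One point to flag: your \emph{main} argument applies $\wtilde F(\ldots,\xi_n)$ to a general $\xi_n\in\wtilde{\mc X}$ and evaluates it as an integral against $\sum_j\alpha_j$. But $\wtilde F$ is only defined (in \eqref{eq:F}) as an integral against a single measure, and the expression $\int\frac{-\mc L g}{g}\sum_j\alpha_j(\dd x)$ depends on the chosen representatives of the orbits $\wtilde\alpha_j$; the supremum over $a_1,\ldots,a_k$ does not absorb independent shifts of several orbits simultaneously. So as written this step does not go through for genuinely multi-orbit $\xi_n$; Lemma~\ref{lem:Ftildelower}, despite its phrasing, is proved via Corollary~\ref{cor:decomposition} for sequences of single orbits. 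Your fallback paragraph is therefore not optional---it is the actual proof---and it is correct: pass to a subsequence with $\wtilde I(\xi_n)<\infty$, use Lemma~\ref{lem:approx} plus a diagonal choice to replace each $\xi_n$ by a single orbit $\wtilde\nu_n\in\wtilde{\mc M}_1$ with $\wtilde\nu_n\to\xi$ and $\wtilde I(\wtilde\nu_n)\le\wtilde I(\xi_n)+1/n$, and then run the chain $\wtilde I(\wtilde\nu_n)\ge\wtilde F(\ldots,\wtilde\nu_n)$ followed by Lemma~\ref{lem:Ftildelower} and Lemma~\ref{lem:ratefct}.

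Compared with the paper, the route differs only in packaging. The paper treats the single-orbit case by reproducing the computation of Lemma~\ref{lem:Ftildelower} inside the proof, and for the multi-orbit case it invokes the last paragraph of \cite[proof of Lemma~4.2]{MV2016}. Your version is more modular: you cite Lemma~\ref{lem:Ftildelower} directly (supplemented by the clean pointwise bound $\wtilde F\le I$), and you handle the multi-orbit case in a self-contained way via Lemma~\ref{lem:approx} rather than by appeal to \cite{MV2016}. The net effect and the underlying ideas are the same.
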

\begin{proof}
	The proof of this lemma follows largely along the lines of the proof of Lemma~\ref{lem:Ftildelower}. In particular, we will use the identification
	\begin{align*}
	\wtilde{I}(\xi)=	\sup_{\substack{R, c>0, \ k\in \mathds{N}\\ u_1, \ldots, u_k\in U}}\wtilde{\Lambda}(\xi,R,c, u_1, \ldots, u_k)
	\end{align*}
	established in Lemma~\ref{lem:ratefct}.
	Therefore, we fix $R,c>0$ and $u_1,\ldots, u_k\in\mc U$ and we define $u_{i,R}$ as in the formulation of Lemma~\ref{lem:Ftildelower}.

	Let $(\wtilde{\mu}_{n})$ be a sequence in $\widetilde{\mc X}$, converging to $\xi =\{\wtilde{\alpha}_i\}_{i\in I}\in\widetilde{\mc X}$. We suppose that $\wtilde{I}(\mu_{n})\leq l$ for some $0\leq l<\infty$ and we will prove that $\wtilde{I}(\xi)\leq l$.

	Initially consider the case where for each $n$ one has that  $\wtilde{\mu}_{n}$ consists of a single orbit, so that $\wtilde{I}(\mu_{n})=I(\mu_{n})$.
	Restricting to a subsequence if necessary, for some $k\geq 1$ we can write by Corollary~\ref{cor:decomposition}
	\begin{align*}
	\mu_n=\sum_{i=1}^{k}\alpha_{n,i}+\beta_n
	\end{align*}
	so that
	\begin{itemize}
		\item $\alpha_{n,i}, \  i=1,\ldots,\ k,$ and $\beta_n$ are sequences of sub-probability measures in $\Z^d$;
		\item for each $i=1, \ldots, k$ there are sequences $a_{n,i}, \ i=1,\ldots, k,$ in $\Z^d$ that can be chosen such that
		\begin{align*}
		&\alpha_{n,i}\ast\delta_{a_{n,i}}\Rightarrow\alpha_i\in \wtilde{\alpha}_i, \ n\to \infty\\
		&\lim_{n\to \infty}\min_{i\neq1} \vert a_{n,i}-a_{n,j}\vert=\infty
		\end{align*}
		and $(\beta_n)$ is widely separated from each $(\alpha_{n,i})$.
		\item 	The supports of $\alpha_{n,1}, \alpha_{n,2}, \ldots, \beta_n$ are all disjoint and for each $i$ there exists a sequence $ (R_{n,i})_n$ tending to infinity such that
		$$\Supp (\alpha_{n,i})\subset \mathcal{B}(-a_{n,i}, R_{n,i})$$
		and
		$$\Supp(\beta_{n,i})\subset \big[\cup_i \mathcal{B}(-a_{n,i}, R_{n,i})\big]^\complement.$$
	\end{itemize}

	Next, we note that by the compactness of the support of $u_{i,R}$ the support of each $u_{i,R}(\cdot-a_{n,i})$ is contained in $B(-a_{n,i}, R)$. We see that we are in the same setting as in the proof of Lemma~\ref{lem:Ftildelower}. Therefore, as in that proof we can conclude that
	%	Then, we have that
	%	\begin{align*}
	%		\liminf_{n\to\infty}\int \dfrac{\mathcal{-L}u_{i, R}}{u_{i, R}}\dd\alpha_{n,i}=\int \dfrac{\mathcal{-L}u_{i, R}}{u_{i, R}}\dd\alpha_{i}.
	%	\end{align*}
	%
	%	If we define
	%	\begin{align*}
	%		u_n=\sum_{i=1}^{k}u_{i,R}
	%	\end{align*}
	%	we have
	\begin{align*}
	l\geq\liminf_{n\to\infty} I(\mu_{n})\geq\liminf_{n\to \infty}\sum_{i=1}^k\int \dfrac{\mathcal{-L}u_{i,R}(\cdot-a_{n,i})}{u_{i,R}(\cdot-a_{n,i})}\dd\mu_{n}
	=\sum_{i=1}^{k}\int\dfrac{\mathcal{-L}u_{i,R}}{u_{i,R}}\dd\alpha_{i}
	=\sum_{i=1}^{k}I(\alpha_{i})\,.
	\end{align*}
	As in the proof of Lemma~\ref{lem:Ftildelower} we can obtain the shift by $b$ by simply shifting each of the $a_{n,i}$'s.
	Taking on both sides the supremum over all $R,c>0$, $k\in\bb N$ and $u_1,\ldots, u_k$ allows to conclude the cases where each $\wtilde{\mu}_n$ consists of a single orbit.
	Finally,
	to treat the general case, that is when $\wtilde{\mu}_n$ has possibly more than one orbit, the idea is
	the same as in the last paragraph of~\cite[proof of Lemma~4.2]{MV2016}.
\end{proof}

\section{Applications of Theorem~\ref{thm:main}}\label{sec:applications}
In this section we present two applications of Theorem~\ref{thm:main}.
\\

\subsection{Application 1} We consider the same assumptions as in Section~\ref{sec:main} and we define the function
\begin{equation}
V(x,y)= \one_{\{x=y\}}
\end{equation}
on $\Z^d\times\Z^d$. For any element $\xi\in\wtilde{\mc X}$ we then consider
\begin{equation}
\Lambda(V,\xi)=\sum_{\alpha\in\xi} \int V(x,y)\, \alpha(dx)\alpha(dy)\,,
\end{equation}
which is continuous in $\xi$ be the definition of the metric $\textbf{D}$ on $\wtilde{\mc X}$. Note that
\begin{equation}
\Lambda(V, L_t)= \frac{1}{t^2} \int_0^t\, ds\,\int_0^t\, du\, \one_{\{X_s=X_u\}}\,,
\end{equation}
which is simply the (rescaled by $t^2$) intersection local time of the random walk $X$.
To continue define a new rate function $I'$ on $\bb R$ via
\begin{equation}
I'(y)= \inf\{\wtilde I(\xi):\,\xi\in\wtilde{\mc X}, \Lambda(V,\xi)=y\}\,.
\end{equation}
Theorem~\ref{thm:main} together with the contraction principle then imply the following result.
\begin{theorem}
The process of rescaled intersection local times $(\Lambda(V, L_t))_{t\geq 0}$ satisfies a strong large deviation principle with rate function $I'$ and rate $t$.
\end{theorem}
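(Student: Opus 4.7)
The plan is to derive the theorem directly from Theorem~\ref{thm:main} by applying the contraction principle to the map
\begin{equation*}
\Phi: \wtilde{\mc X} \to \bb R\,, \qquad \Phi(\xi) = \Lambda(V,\xi)\,.
\end{equation*}
A direct computation confirms that $\Phi(\wtilde{L}_t) = \Lambda(V, L_t) = \tfrac{1}{t^2}\int_0^t\!\!\int_0^t \one_{\{X_s = X_u\}}\,\dd s\,\dd u$, since $\wtilde{L}_t$ consists of a single orbit, so the claim will follow from a general LDP for the pushforward of $Q_t$ under $\Phi$.

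First I would verify that $V\in\mc F_2$: the diagonal translation invariance $V(x+c,y+c)=V(x,y)$ is immediate, and the decay condition $\lim_{|x-y|\to\infty}V(x,y)=0$ is trivial since $V$ is supported on the diagonal. The core step is then to check that $\Phi$ is continuous on $(\wtilde{\mc X},\mathbf{D})$. For this I would approximate $V$ in the uniform metric by elements of the countable dense family $\{f_r\}_{r\in\bb N}\subseteq \mc F_2$ that underlies the construction of $\mathbf{D}$. Using that for every $\xi\in\wtilde{\mc X}$ one has
\begin{equation*}
\sum_{\wtilde\alpha\in\xi}\alpha(\Z^d)^2\;\leq\;\Bigl(\sum_{\wtilde\alpha\in\xi}\alpha(\Z^d)\Bigr)^2\;\leq\;1\,,
\end{equation*}
the error $|\Lambda(V,\xi)-\Lambda(f_r,\xi)|$ is bounded by $\|V-f_r\|_\infty$ uniformly in $\xi$. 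Combined with the fact that $\xi\mapsto\Lambda(f_r,\xi)$ is continuous by construction of $\mathbf{D}$, a standard $3\varepsilon$-argument then yields continuity of $\Phi$.

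With continuity in hand, the remainder is routine. Theorem~\ref{thm:completion} gives that $\wtilde{\mc X}$ is compact, so the lower semicontinuous rate function $\wtilde{I}$ established in the previous subsection is automatically a good rate function, its level sets being closed subsets of a compact space. The contraction principle then produces a strong large deviation principle for $(\Phi(\wtilde{L}_t))_{t\geq 0}=(\Lambda(V,L_t))_{t\geq 0}$ on $\bb R$ at rate $t$, with rate function $I'(y)=\inf\{\wtilde{I}(\xi):\Phi(\xi)=y\}$, which matches the definition given in the statement. The main obstacle is precisely the continuity check for $\Phi$: it hinges on the uniform mass bound displayed above, without which the sup-norm approximation of the singular kernel $V$ by the countable family $\{f_r\}$ would not transfer to continuity of $\Lambda(V,\cdot)$ on all of $\wtilde{\mc X}$.
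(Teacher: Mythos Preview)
Your argument is correct and follows exactly the route the paper takes: continuity of $\xi\mapsto\Lambda(V,\xi)$ together with Theorem~\ref{thm:main} and the contraction principle. The paper disposes of the continuity in one line (``continuous in $\xi$ by the definition of the metric $\mathbf{D}$''), whereas you spell out the $3\varepsilon$-argument and the uniform mass bound; both lead to the same conclusion.
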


\noindent
\subsection{Application 2} Consider a bounded non-negative function $V\in\mc F_2$ which is non-zero, taking its maximum in $(0,0)$ such that $V(0,0)>\sum_{y}a_{0,y}$, and define
\begin{equation}
	Z_t= \bb E_0\Big[\exp\Big(t\int V(x,y) L_t(dx)L_t(dy)\Big)\Big]\,.
\end{equation}
\begin{remark}\rm
The definition of $Z_t$ is somewhat related to the Polaron problem~\cite{DV83} which corresponds to the choice $d=3$,  $V(x,y)=\tfrac{1}{|x-y|}$ and where the underlying process is a Brownian motion instead of a random walk. We see that this function does not fall into the class of functions considered here. The reason is simply that this choice of function renders $Z_t$ meaningless. One could possibly circumvent this by studying an appropriate choice of approximations of the potential of the Polaron problem. The drawback would be that this would translate to sequences of empirical measures that depend on an additional approximation parameter $N$. In this work we prefer to refrain from it.
\end{remark}
It follows from Laplace-Varadhan's~\cite[Theorem 4.3.1]{DemboZei10:book} Lemma that
\begin{equation}\label{eq:varfor}
	\lim_{t\to\infty}\frac1t \log Z_t
	=\sup_{\xi\in\wtilde{\mc X}}\big\{\Lambda (V,\xi)-\wtilde{I}(\xi)\big\}=:\lambda\,.
\end{equation}
We will analyze the variational formula in the sequel and will show that there is a maximizer and that this maximizer is a probability measure. We will start with the existence of the maximizer.
Consider a sequence $(\xi_n)_n$ in $\wtilde{\mc X}$ such that
\begin{equation*}
\lim_{n\to\infty}\big\{\Lambda (V,\xi_n)-\wtilde{I}(\xi_n)\big\}=\lambda\,.
\end{equation*}
By the compactness of $\wtilde{\mc X}$ we can extract a subsequence of $(\xi_n)_n$ that converges to an element $\xi\in\wtilde{\mc X}$. For ease of notation we denote the subsequence again by $(\xi_n)_n$. Since $\wtilde{I}$ is lower-semicontinuous and $\xi\mapsto \Lambda(V,\xi)$ is continuous we have that
\begin{equation*}
\lambda=\limsup_{n\to\infty}\big\{\Lambda (V,\xi_n)-\wtilde{I}(\xi_n)\big\}\leq \Lambda(V,\xi)-\wtilde{I}(\xi)\,,
\end{equation*}
from which we can conclude that $\xi$ is a maximizer.
We next claim that the maximizer has only one component. To see that assume that the maximizer $\xi$ has at least two components, denoted by $\alpha_1$ and $\alpha_2$. Consider representants $\alpha_1$ and $\alpha_2$ such that $\int V(x,y)\alpha_1(dx)\, \alpha_2(dy)>0$. This is possible since $V$ is non-zero. We then define $\alpha_3=\alpha_1+\alpha_2$ and
\begin{equation*}
	\wtilde\xi=\{\alpha_3, \xi\setminus\{\alpha_1, \alpha_2\}\}\,.
\end{equation*}
Then,
\begin{equation*}
\begin{aligned}
	\Lambda(V, \alpha_3) &= \int V(x,y)\alpha_1(dx)\, \alpha_1(dy) +\int V(x,y)\alpha_2(dx)\, \alpha_2(dy) + 2\int V(x,y)\alpha_1(dx)\, \alpha_2(dy)\\
	&> \Lambda(V,\alpha_1)+\Lambda(V,\alpha_2)\,,
	\end{aligned}
\end{equation*}
by the choice of $\alpha_1$ and $\alpha_2$.
By Lemma~\ref{lem:PropertiesI}
\begin{equation*}
	I(\alpha_3)\leq I(\alpha_1)+ I(\alpha_2)\,.
\end{equation*}
From these two observation it follows that
\begin{equation*}
\Lambda (V,\xi)-\wtilde{I}(\xi) < \Lambda (V,\tilde\xi)-\wtilde{I}(\tilde\xi)\,,
\end{equation*}
from which the claim follows.
We next show that any maximizer is a probability measure.
To that end assume that $\beta$ is a sub-probability measure whose orbit maximizes the variational formula in~\eqref{eq:varfor}. Define $p=\beta(\Z^d)\in (0,1)$ and consider the probability measure $\mu=\tfrac{\beta}{p}$ (it will follow from the arguments further below that $p>0$).
Then,
\begin{equation*}
\Lambda(V, \mu)-I(\mu)= \frac{1}{p}\Big(\frac{\Lambda(V,\beta)}{p}-I(\beta)\Big)
\geq \frac{1}{p}(\Lambda(V,\beta)-I(\beta)) > \Lambda(V,\beta)-I(\beta)\,,
\end{equation*}
contradicting the assumption that $\beta$ maximizes~\eqref{eq:varfor}. Here we used that the supremum must be positive. Indeed, define
\begin{equation*}
\tau_1=\inf\{t\geq 0:\, X_t\neq 0\}\,,
\end{equation*}
which is the first jump time of the random walk $X$ and has an exponential distribution with parameter $\sum_{y\in\Z^d}a_{0,y}$. It then follows that
\begin{equation*}
Z_t\geq \bb E_0\Big[\exp\Big(t\int V(x,y) L_t(dx)L_t(dy)\Big)\one_{\{\tau_1>t\}}\Big]\geq \exp\Bigg(t\Big(V(0,0)-\sum_{y\in\Z^d}a_{0,y}\Big)\Bigg)\,,
\end{equation*}
and the claim follows by the assumption that $V(0,0)>\sum_{y\in\Z^d}a_{0,y}$.
The above analysis implies the following result:
\begin{theorem}
Consider a function $V:\Z^d\times\Z^d\to\R_+$ satisfying the above assumptions, then
\begin{equation*}
\lim_{t\to\infty}\frac1t\log \bb E_0\Big[\exp\Big(\frac1t\int_0^t\,ds\, \int_0^t\, du\, V(X_s-X_u)\Big)\Big] =\sup_{\mu\in\mc M_1(\Z^d)}\big\{\Lambda (V,\xi)-I(\xi)\big\}\,.
\end{equation*}
The above variational formula has at least one maximizer. Moreover, if $\mu$ is a maximizer, then for any $x\in\Z^d$, the shifted measure $\mu_x:=\mu*\delta_x$ is a maximizer as well.
\end{theorem}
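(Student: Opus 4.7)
The plan is to derive the limit formula via Varadhan's lemma applied to Theorem~\ref{thm:main}, and then to analyze the resulting variational problem using the tools already assembled in Section~\ref{sec:proof}. As a first step I would verify that $\xi\mapsto\Lambda(V,\xi)$ is bounded and continuous on the compact space $\wtilde{\mc X}$: boundedness follows from $V$ being bounded, while continuity is essentially built into the metric ${\bf D}$, since any bounded $V\in\mc F_2$ can be approximated uniformly by elements of the countable dense family $(f_r)$ used in the definition of ${\bf D}$. Combining this with the strong LDP of Theorem~\ref{thm:main} and Varadhan's lemma \cite[Theorem~4.3.1]{DemboZei10:book} yields
\[
\lim_{t\to\infty}\frac1t\log Z_t=\sup_{\xi\in\wtilde{\mc X}}\big\{\Lambda(V,\xi)-\wtilde{I}(\xi)\big\}=:\lambda.
\]

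For existence of a maximizer I would use a standard compactness argument: extract a convergent subsequence from any maximizing sequence by compactness of $\wtilde{\mc X}$ (Theorem~\ref{thm:completion}), and conclude that the limit $\xi^{*}$ attains $\lambda$ by continuity of $\Lambda(V,\cdot)$ and lower semicontinuity of $\wtilde{I}$. To identify maximizers with elements of $\mc M_1(\Z^d)$, I would perform two reductions. If $\xi^{*}$ contained two distinct orbits $\wtilde\alpha_1,\wtilde\alpha_2$, the freedom in choosing their representatives, together with the non-negativity and non-triviality of $V$, would let me select $\alpha_1,\alpha_2$ with $\int V(x,y)\,\alpha_1(dx)\alpha_2(dy)>0$; then the merged measure $\alpha_3:=\alpha_1+\alpha_2$ would satisfy $\Lambda(V,\alpha_3)>\Lambda(V,\alpha_1)+\Lambda(V,\alpha_2)$ while sub-additivity of $I$ from Lemma~\ref{lem:PropertiesI} would give $I(\alpha_3)\le I(\alpha_1)+I(\alpha_2)$, contradicting optimality of $\xi^{*}$. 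If a single-orbit maximizer $\beta$ had total mass $p\in(0,1)$, then rescaling $\mu:=\beta/p$ and using the degree-$2$ homogeneity of $\Lambda(V,\cdot)$ together with the degree-$1$ homogeneity of $I$ (Lemma~\ref{lem:PropertiesI}) would strictly improve the functional, provided $\lambda>0$. Positivity of $\lambda$ would in turn be verified by the elementary lower bound $Z_t\geq\exp\big(t(V(0,0)-\sum_y a_{0,y})\big)$, obtained by restricting to the event $\{\tau_1>t\}$ that the walk makes no jump before time $t$, which is strictly exponentially growing thanks to the assumption $V(0,0)>\sum_y a_{0,y}$.

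Finally, the translation invariance of the maximizer set is immediate: $\Lambda(V,\mu*\delta_x)=\Lambda(V,\mu)$ because $V\in\mc F_2$ is diagonally translation invariant, and $I(\mu*\delta_x)=I(\mu)$ by Lemma~\ref{lem:PropertiesI}. I expect the main technical obstacle to be the continuity of $\Lambda(V,\cdot)$ on all of $\wtilde{\mc X}$, which requires a careful use of the definition of ${\bf D}$ and the density of $(f_r)$ in $\mc F_2$; once this is in hand, every remaining step follows from the compactness of $\wtilde{\mc X}$, the lower semicontinuity of $\wtilde{I}$, the sub-additivity and homogeneity of $I$, and the first-jump lower bound on $Z_t$.
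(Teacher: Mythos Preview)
Your proposal is correct and follows essentially the same approach as the paper: Varadhan's lemma for the limit, compactness plus lower semicontinuity for existence, the merging argument via sub-additivity of $I$ to reduce to a single orbit, the rescaling argument via homogeneity together with the first-jump lower bound to force total mass one, and translation invariance of $I$ and $\Lambda(V,\cdot)$ for the final statement. If anything, you are slightly more careful than the paper in flagging that continuity of $\xi\mapsto\Lambda(V,\xi)$ on $\wtilde{\mc X}$ requires the uniform approximation of $V$ by the dense family $(f_r)$; the paper simply asserts this continuity as a consequence of the definition of ${\bf D}$.
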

\begin{remark}\rm
It would be interesting to know under which conditions the above variational formula has a unique (unique modulo translations in space) maximizer. This however seems a delicate problem. Even in the case in which $\Z^d$ is replaced by $\R^d$ this is not a trivial issue, see~\cite{Lieb77}. We do not pursue this question here.
\end{remark}
\section*{Acknowledgements}
T.F.\ was supported by by the National Council for Scientific and Technological Development - CNPq via a Bolsa de Produtividade 311894/2021-6.
 D.E. was supported by the National Council for Scientific and Technological Development - CNPq via a Bolsa de Produtividade 303348/2022-4 and Serrapilheira Institute (Grant Number Serra-R-2011-37582). D.E. and T.F. acknowledge support by the National Council for Scientific and Technological Development - CNPq (Universal Grant 406001/2021-9). J.S.\ acknowledges support by Capes through a PhD scholarship.

\bibliography{bibliography}
\bibliographystyle{plain}

\end{document}